\newcounter{x}
\newcounter{y}
\newcounter{z}
\newcommand*\cubecolors[1]{%
  \ifcase#1\relax
  \or\colorlet{cubecolor}{cyan}%
  \or\colorlet{cubecolor}{green}%
  \or\colorlet{cubecolor}{yellow}%
  \or\colorlet{cubecolor}{pink}%
  \or\colorlet{cubecolor}{purple}%
  \or\colorlet{cubecolor}{blue}%
  \else
    \colorlet{cubecolor}{white}%
  \fi
}
\newcommand\yaxis{180}
\newcommand\zaxis{-27}
\newcommand\xaxis{90}
\newcommand\topside[3]{
  \fill[fill=cubecolor, draw=black,shift={(\xaxis:#1)},shift={(\yaxis:#2)},
  shift={(\zaxis:#3)}] (0,0) -- (1,0) -- (0.5,0.25) --(-0.5,0.25)--(0,0);
}
\newcommand\leftside[3]{
  \fill[fill=cubecolor, draw=black,shift={(\xaxis:#1)},shift={(\yaxis:#2)},
  shift={(\zaxis:#3)}] (0,0) -- (0,-1) -- (-0.5,-0.75) --(-0.5,0.25)--(0,0);
}
\newcommand\rightside[3]{
  \fill[fill=cubecolor, draw=black,shift={(\xaxis:#1)},shift={(\yaxis:#2)},
  shift={(\zaxis:#3)}] (0,0) -- (1,0) -- (1,-1) --(0,-1)--(0,0);
}
\newcommand\cube[3]{
  \topside{#1}{#2}{#3} \leftside{#1}{#2}{#3} \rightside{#1}{#2}{#3}
}
\newcommand\planepartition[2]{
 \setcounter{x}{0}
 \foreach \a in {#2} {
    \addtocounter{x}{1}
    \setcounter{y}{-1}
    \cubecolors{\value{x}}
    \foreach \b in \a {
      \addtocounter{y}{1}
      \setcounter{z}{-1}
      \foreach \c in {0,...,\b} {
        \addtocounter{z}{1}
      \ifthenelse{\c=0}{\setcounter{z}{-1},\addtocounter{y}{0}}{
        \FPeval{\newz}{clip(0.55*\the\value{z})}
        \cube{\value{x}}{\value{y}}{\newz};
        \FPeval{\result}{clip(#1-\the\value{y}+2*\the\value{z})}
        \draw[draw=black,shift={(\xaxis:\value{x})},shift={(\yaxis:\value{y})},
  shift={(\zaxis:\newz)}] (0.5,-0.5) node {\textsf{\hspace{0.01cm}}};}
      }
    }
  }
}
\newcommand\planepartitionD[2]{
 \setcounter{x}{0}
 \foreach \a in {#2} {
    \addtocounter{x}{1}
    \setcounter{y}{-1}
    \cubecolors{\value{x}}
    \foreach \b in \a {
      \addtocounter{y}{1}
      \setcounter{z}{-1}
      \foreach \c in {0,...,\b} {
        \addtocounter{z}{1}
      \ifthenelse{\c=0}{\setcounter{z}{-1},\addtocounter{y}{0}}{
        \FPeval{\newz}{clip(0.55*\the\value{z})}
        \cube{\value{x}}{\value{y}}{\newz};
        \FPeval{\result}{clip(#1-\the\value{y}-2*\the\value{z})}
        \draw[draw=black,shift={(\xaxis:\value{x})},shift={(\yaxis:\value{y})},
  shift={(\zaxis:\newz)}] (0.5,-0.5) node {\textsf{\result}};}
      }
    }
  }
}
\tikzset{node distance=2em, ch/.style={circle,draw,on chain,inner sep=2pt},chj/.style={ch,join}, line width=1pt,baseline=-1ex}
\theoremstyle{plain}
\newtheorem{theorem}{Theorem}[section]
\newtheorem{lemma}[theorem]{Lemma}
\newtheorem{example}[theorem]{Example}
\newtheorem{definition}[theorem]{Definition}
\newtheorem{defn}[theorem]{Definition}
\newtheorem{corollary}[theorem]{Corollary}
\newcommand{\mcB}{\mathcal{B}}
\newcommand{\mcD}{\mathcal{D}}
\DeclareMathOperator{\adj}{adj}
\DeclareMathOperator{\supp}{S}
\begin{document}
\title[Enumerating Staircase Diagrams over type $E$ Dynkin Diagrams]{Enumerating Staircase Diagrams and Smooth Schubert Varieties over type $E$ Dynkin Diagrams}

\author{Andean Medjedovic}
\email{a2medjed@uwaterloo.ca}

\author{William Slofstra}
\email{weslofst@uwaterloo.ca}

\begin{abstract}
    We enumerate the number of staircase diagrams over classically finite $E$-type Dynkin diagrams,
    extending the work of Richmond and Slofstra (Staircase Diagrams and Enumeration of smooth Schubert varieties) and
    completing the enumeration of staircase diagrams over finite type Dynkin diagrams. The staircase diagrams are in bijection to smooth
    and rationally smooth Schubert varieties over $E$-type thereby giving an enumeration of these varieties.
\end{abstract}
\maketitle
\section{Introduction}

\subsection{Staircase Diagrams}
The goal of this paper is to complete the enumeration done by Slofstra and Richmond in \cite{RS15}. It was known from \cite{RS15} and \cite{BP05} that it is possible to
enumerate rationally smooth Schubert varieties through what is known as a Billey-Postnikov decomposition. There is a bijection, derived in one of the authors previous works, between staircase diagrams, and these smooth varieties.\\

Informally, a staircase diagram is a construction of ``blocks'' over some underlying graph. Each block is some subset of the vertices of the graph, with some ordering of the blocks. We also require each block to ``see'' both up and down. This corresponds to a minimal and maximal in the poset for every given block. To illustrate the idea, what follows is a staircase diagram on the line graph of length $9$, another diagram on a star graph with $3$ leaves and a non-example.\\

\begin{center}
\begin{tikzpicture}[scale=0.5]
            \planepartition{}{ {0,0,0,0,0,0,0,0,1,0},{0,0,1,0,1,1,1,1,0,1,1},{0,0,0,1,1,1,1}}.
        \end{tikzpicture},
\begin{tikzpicture}[scale=0.5]
            \planepartition{}{ {0,0,0,0,0,2,0,0,0,0},{0,0,0,0,0,1,1,0,0,0,0},{0,0,0,0,1,1,0}}.
        \end{tikzpicture},
\begin{tikzpicture}[scale=0.5]
            \planepartition{}{ {0,0,0,0,0,1,1,1,1,0},{0,0,0,0,0,1,1,1,0,0,0},{0,0,0,1,1,1,1}}.
        \end{tikzpicture}

\end{center}

Let $E_{Disc.}(x) = \sum_{n=1}^{\infty} e_n x^n$, where $e_n$ is the number of smooth Schubert varieties of finite classical type $E_n$. The main result of the paper is
\begin{theorem}
	\[
		E_{Disc.}(x) = \frac{P_{Disc.}(x)+Q_{Disc.}(x)\sqrt{1-4x}}{R_{Disc.}(x)}
	.\]
Where \tiny
$$P_{Disc.}(x) = 1536 x^{16}-9586 x^{15}+20762 x^{14}-19750 x^{13}+10942 x^{12}-15139 x^{11}+27760 x^{10}-28954 x^9$$
$$+16898 x^8-4690 x^7-173 x^6+689 x^5-454 x^4+238 x^3-73 x^2+8 x$$
$$Q_{Disc.}(x) = 3224 x^{16}-13230 x^{15}+21016 x^{14}-15930 x^{13}+4800 x^{12}+3759 x^{11}-10616 x^{10}+13958 x^9$$
$$-10482 x^8+4200 x^7-695 x^6-95 x^5+168 x^4-140 x^3+57 x^2-8 x$$
\normalsize
and
$$R_{Disc.}(x) = (-1 + x)^4 (-1 + 6 x - 8 x^2 + 4 x^3).$$
\end{theorem}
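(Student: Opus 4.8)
The plan is to reduce the enumeration over the $E_n$ family to the already-understood enumeration over path graphs (type $A$), exploiting the fact that every $E_n$ Dynkin diagram is a fixed ``branching core'' together with a single growing tail. Concretely, $E_n$ is the path $s_1,\dots,s_{n-1}$ with one extra node attached at the third vertex, so as $n$ grows only the length of the long arm changes while the local picture at the branch node stays fixed. By the bijection recalled above (and established in \cite{RS15}), $e_n$ equals the number of staircase diagrams over this graph, so it suffices to produce the generating function for staircase diagrams over the $E_n$ family.

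First I would recall from \cite{RS15} the recursive/transfer-matrix description of staircase diagrams over a path graph: reading the path from one end, the admissible ways of extending a diagram by one vertex are governed by a finite set of local states (recording which blocks are ``open'' at the current vertex, together with the up/down seeing conditions), so that the type-$A$ generating function is algebraic. In fact the type-$A$ count of (rationally) smooth Schubert varieties is the classical smooth-permutation generating function, whose denominator is exactly $-1+6x-8x^2+4x^3$ and whose numerator carries a single $\sqrt{1-4x}$; this is the origin of both the cubic factor in $R_{Disc.}(x)$ and of the radical in the statement.

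Next I would decompose a staircase diagram over $E_n$ by cutting at the branch node. The blocks fall into those supported entirely in one of the three arms and those that meet the branch vertex; the latter form a bounded family because the short arm has fixed length and the branch vertex can lie in only a controlled number of blocks. For each admissible ``branch configuration'' the two long portions of the diagram are independent staircase diagrams over path graphs, so their contribution factors as a product of type-$A$ series. Summing $x^n$ over all $n$ and over the finitely many branch configurations then expresses $E_{Disc.}(x)$ as a finite $\Z(x)$-linear combination of products of the type-$A$ series and its rational pieces, where the geometric summation over tail lengths produces the extra factor $(1-x)^4$ in $R_{Disc.}(x)$. Crucially, any product of two radical-bearing factors must collapse to a rational term, since $(\sqrt{1-4x})^2=1-4x$, leaving a single surviving $\sqrt{1-4x}$ consistent with the shape of the answer.

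The main obstacle is the bookkeeping at the branch node: the staircase conditions (every block must ``see'' both up and down, i.e.\ be appropriately minimal and maximal in the block poset) couple the three arms, so the set of admissible branch configurations and their precise interaction with the incoming and outgoing open states must be enumerated exactly, with no double counting. Once this finite transfer data is correct, extracting $P_{Disc.}$, $Q_{Disc.}$, and $R_{Disc.}$ is a (computer-assisted) algebraic simplification of the resulting rational-plus-radical expression. As a final check I would verify the initial coefficients $e_n$ against direct enumeration for small $n$, in particular the genuinely exceptional ranks $E_6,E_7,E_8$ where the $E_n$ diagram is of finite type, to confirm both the closed form and the treatment of the degenerate small cases.
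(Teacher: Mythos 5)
Your strategy is, in outline, exactly the paper's own: cut at the branch vertex, note that the blocks containing it form a bounded family (Theorem \ref{height}), let the arms contribute path-type generating series, and finish with computer-assisted algebra; the cubic $-1+6x-8x^2+4x^3$ indeed enters through the type-$A$ series. So this is not a different route. The problem is that the step you compress into ``once this finite transfer data is correct, the rest is bookkeeping'' is the entire mathematical content of the paper, and the one structural claim you do make about it is false as stated: after cutting at the branch node, the arm portions are \emph{not} independent unconstrained staircase diagrams over paths, so their contribution does not factor as a product of the plain type-$A$ series.

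Concretely, a block through the branch vertex protrudes into each arm, and the arm diagram adjacent to it must begin with a block equal to that protrusion (the validity condition for the join in Definition \ref{D:J}); this forces the refined series $B_k(x)$ of type-$A$ diagrams whose first block has prescribed length $k$ (Lemmas \ref{special cat}--\ref{SD}), not $A(x)$ itself. Moreover, condition (4) of Definition \ref{D:staircase} (every block minimal and maximal over some vertex) couples all three arms through the branch blocks: whether a branch block ``sees up and down'' depends simultaneously on what is glued onto every arm. The paper makes this finite and tractable by classifying branch configurations by their sight-sets (Corollary \ref{l:cor}), proving sight-sets are unchanged by joins (Theorem \ref{adding lemma}) and determine all admissible completions (Lemma \ref{sight-set fix}), and then tabulating the finitely many broken towers with their parametrizations and ``continuation'' counts (Lemma \ref{cases}); without this classification there is no finite transfer data to sum, and with plain type-$A$ factors the count comes out wrong. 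Two further gaps: the statement concerns $E_{Disc.}(x)$, including non-fully-supported and disconnected diagrams, which the paper handles by a separate inclusion--exclusion over the four key vertices (branch vertex plus the three vertices of the two short arms), bringing in $A_{Disc.}(x)$, $A(x)$, $D(x)$ and the connected series $E(x)$ of Theorem \ref{main} --- your sketch never says how uncovered vertices are organized, and in particular misses that a type-$D$ series appears when the outer vertex of the length-two arm is uncovered. And a minor inaccuracy: the factor $(1-x)^4$ does not come from a geometric sum over tail lengths, but from summing the polynomial-in-$k$ parametrization counts over the size $k$ of the largest branch block. Checking small ranks $E_6,E_7,E_8$ is a sensible sanity test but cannot certify the closed form.
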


The growth rate of the coefficients of a generating series is determined by the singularity of smallest modulus. This result can be found in \cite{FS09}.
In this case it is one of the roots of $\left( -1 + 6x - 8x^2 +4x^3 \right) $,
\[
\alpha := \frac{1}{6}\left( 4 - \sqrt[3]{17+3\sqrt{33}} + \sqrt[3]{-17+3\sqrt{33} } \right) \approx 0.228155
.\]

Then $e_n \sim \frac{1}{\alpha^{n+1}}$.
This agree with the growth rate of all enumerations over classical finite type Dynkin diagrams \cite{RS15}.

\subsection{Outline}
We begin by introducing formally the notion of a staircase diagram and definitions corresponding to it. We then formalize tools to dissect these diagrams
into parts that are easier to enumerate combinatorially, giving plenty of examples along they way to build intuition. We prove a few lemmas relating to the possible endings staircase diagrams can have and the form these endings take. We define sight-sets of a partial staircase diagram which encodes how close to being a valid staircase diagram it is. In section $4$ we work on the connected $E$-type case using generating function techniques related to dyck paths and the theory so far developed. In the penultimate section we deal with the more complicated disconnected case using the now complete classification for connected staircase diagrams. Finally, we leave with an explanation of how staircase diagrams relate to smooth Schubert varieties and the Billey-Postnikov decomposition \cite{BP05}.
\subsection{Acknowledgements}

We would like to thank NSERC for providing the funding for this research,
and the members of IQC for helpful discussions. Finally we would like to thank
Jang Soo Kim, for TikZ plane partition code which was modified to make the staircase diagrams within the paper.

\section{Preliminary Definitions}\label{S:staircase_def}

We use the definition of staircase diagram as in \cite{RS15}.
Let $\Gamma$ be a graph and $S$ be the set of vertices in $\Gamma$.
If $s,t \in S$ we write $s$ adj $t$ to mean that $s$ is adjacent to $t$.
Let $\mcD \subseteq 2^S$ and $\prec$, a poset on $\mcD$. Elements $\mcB \in \mcD$ are called \emph{blocks} of $\mcD$.
Let $\mcB \in \mcD$. We say $\mcB$ is connected if the induced subgraph of $\mcB$ is connected in $\Gamma$.
Recall that $\mcB$ covers $\mcB'$, in the partial order under $\prec$, if $\mcB \succ \mcB'$ and there is no $\mcB'' \in \mcD$ so that $\mcB \succ \mcB'' \succ \mcB'$.
A subset $C \subset \mcD$ is a chain if it is totally ordered.
A subset $C \subset \mcD$ is saturated if there is no $\mcB'' \in \mcD\setminus C$ with $\mcB \succ \mcB'' \succ \mcB'$ for any $\mcB, \mcB' \in \mcD$.

Given some vertex in $\Gamma$ we define $\mcD_s$ to be the blocks over $S$:
	\begin{equation*}
    		\mcD_s:=\{\mcB\in\mcD\ |\ s\in \mcB\}.
	\end{equation*}

We now define the staircase diagram:
\begin{definition}\label{D:staircase}

	Let $\mcD = (\mcD, \preceq)$ be a partially ordered subset of $2^S$ not
    containing the empty set. We say that $\mcD$ is a \emph{staircase diagram}
    if the following are true:
    \begin{enumerate}[(1)]
        \item Every $\mcB\in\mcD$ is connected, and if $\mcB$ covers $\mcB'$ then $\mcB\cup \mcB'$ is connected.
        \item The subset $\mcD_s$ is a chain for every $s \in S$.
        \item If $s\adj t$, then $\mcD_s\cup \mcD_t$ is a chain, and $\mcD_s$ and $\mcD_t$
            are saturated subchains of $\mcD_s \cup \mcD_t$.
        \item If $\mcB\in\mcD$, then there is some $s\in S$ (resp. $s'\in S$) such
            that $\mcB$ is the minimum element of $\mcD_s$ (resp. maximum element of
            $\mcD_{s'}$).
    \end{enumerate}
\end{definition}

In later parts of the paper we deal with type $E$ Dynkin diagrams and introduce other constructions on staircase diagrams over $E$-type diagrams. It is useful to have the following definition:

\begin{definition}\label{D:stargraph}
A star graph is defined to be a tree with the property that only $1$ node (called the branching vertex) has degree greater than $2$. A branch of a star graph is the path from a vertex of degree $1$ to the branching vertex and the length of a branch is the length of the corresponding path.
We let $\Gamma(a_0, a_1, \cdots, a_k)$ denote a graph with branches of length $a_0, \cdots, a_k$. We say the branch of length $a_0$ is the main branch of the graph. Let $v_b$ denote the branching vertex.
\end{definition}

For example, the following is a star graph $\Gamma(6,4,2)$.\\
\begin{example}\label{ex1}
\end{example}
\begin{tikzpicture}
[scale=.8,auto=left,every node/.style={circle,fill=blue!20}]
  \node (n6) at (10,0) {6};
  \node (n5) at (8,0)  {5};
\node (n4) at (6,0)  {4};
  \node (n3) at (4,0) {3};
  \node (n2) at (2,0) {2};
  \node	(n1) at (0,0)	{1};
  \node (n7) at (12,2)  {7};
  \node (n8) at (14,4)  {8};
  \node (n9) at (16,6)	{9};
  \node (n10) at (12,-2)  {10};

  \foreach \from/\to in {n1/n2, n2/n3, n3/n4, n4/n5, n5/n6, n6/n10, n6/n7, n7/n8, n8/n9}
    \draw (\from) -- (\to);

\end{tikzpicture}\\

And the following is a staircase diagram over that graph.\\

\begin{center}
\begin{tikzpicture}[scale=0.5]
            \planepartition{}{ {0,0,0,0,0,2,0,0,1,1},{0,0,1,0,1,1,1,1,0,1,1},{0,0,0,1,1,1,1}}.
        \end{tikzpicture}
\end{center}

\begin{defn} \cite{RS15}
    We define $\supp(\mcD)$ to be the set of vertices in the support of a staircase diagram.
    \begin{equation*}
        \supp(\mcD) := \bigcup_{\mcB \in \mcD} \mcB.
    \end{equation*}
    We say $\mcD$ is \emph{connected} if the support is a connected subset
    of the base graph. A subset $\mcD' \subset \mcD$ is a \emph{subdiagram} if
    $\mcD'$ is a saturated subset of $\mcD$.
\end{defn}

\begin{definition}
A staircase diagram, $D$, is fully supported (on the graph $\Gamma$) if for each $\gamma \in \Gamma$, $D_\gamma$ is non-empty.
\end{definition}

We now introduce tools to decompose staircase diagrams. Along each branch we will have $2$ parts, the broken tower and the regular part.
\begin{definition}\label{D:Diagram}
A broken tower over a vertex $s$ is the set $\mcD_{s}$. Unless otherwise stated, we consider broken tower diagrams over the branch vertex $v_b$.
We say that a restriction to a broken tower diagram over a vertex $s_1$ is a poset of $2^S$ containing only sets of the form $s_1, \cdots, s_l$ and satisfying axioms $(1) - (3)$ of the staircase diagram definition.
In the case that the poset also satisfies axiom $(1)-(4)$ then it is said to be a tower diagram.
\end{definition}

\begin{definition}\label{D:Rest}
Given a subgraph $H \subset G$ and a staircase diagram $\mcD$ over $G$ the restriction of $\mcD$ to $H$ is the set $\mcD|_H = \{\mcB \cap H | \mcB \in \mcD\}$.
 The restriction of a subgraph to a broken tower diagram of $\mcD$ over a branch $H \subset \Gamma$, $\Gamma$ being a star graph (with $v_b$ being the branch vertex) is the set $\mcD_{v_b}|_H = \{\mcB \cap H | \mcB \in \mcD_{v_b}\}$. Define $\mcB_l$ to be the maximum length block in the restriction to a broken tower, that is, so that $|\mcB_i|$ is maximal. In the case where there are $2$ or more blocks with maximum length we define $\mcB_l$ to be the set of all maximum length blocks.
\end{definition}
The following is an immediate consequence of the above definition.
\begin{lemma}
The $2$ definitions of a restriction to a broken tower are equivalent.
\end{lemma}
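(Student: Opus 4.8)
The plan is to unwind what the two constructions actually produce and check that they name the same poset. Since $H$ is a branch of the star graph $\Gamma$, it is by Definition~\ref{D:stargraph} the path from a degree-one vertex to the branching vertex, so $v_b \in H$. The first definition forms the broken tower $\mcD_{v_b}$ inside $\mcD$ and then restricts it to the branch, giving $\mcD_{v_b}|_H = \{\mcB \cap H \mid \mcB \in \mcD_{v_b}\}$. The competing route is to restrict the whole diagram first, obtaining $\mcD|_H = \{\mcB \cap H \mid \mcB \in \mcD\}$, and then extract the broken tower over $v_b$ in $\mcD|_H$, namely $(\mcD|_H)_{v_b}$. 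So the content of the lemma is the commutation statement $\mcD_{v_b}|_H = (\mcD|_H)_{v_b}$, and I would phrase the proof around establishing this identity at the level of both underlying sets and inherited orders.

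The key step is a one-line membership observation. For any $\mcB \in \mcD$, because $v_b \in H$ we have $v_b \in \mcB \cap H$ if and only if $v_b \in \mcB$, i.e.\ if and only if $\mcB \in \mcD_{v_b}$. Hence the blocks of $\mcD|_H$ that contain $v_b$ are precisely the sets $\mcB \cap H$ arising from $\mcB \in \mcD_{v_b}$, which is exactly the set-level equality $(\mcD|_H)_{v_b} = \mcD_{v_b}|_H$. As a free byproduct, each such block is nonempty (it contains $v_b$), so no empty-set collisions occur and the object is a legitimate broken tower on $H$; this is why the branch vertex must lie on $H$ for the statement to even be well posed.

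The only genuinely nontrivial point, and the one I expect to be the main obstacle, is checking that the two routes equip this common set with the same partial order. Both orders are images of $\prec$ on $\mcD$: in $\mcD_{v_b}|_H$ one restricts the chain order on $\mcD_{v_b}$ to $H$, while in $(\mcD|_H)_{v_b}$ one takes the order that $\mcD|_H$ inherits from $\mcD$ and then restricts to the blocks through $v_b$. Since $\mcD_{v_b}$ is a chain by axiom (2) of Definition~\ref{D:staircase}, the comparison of any two blocks $\mcB \cap H$ and $\mcB' \cap H$ is forced to agree with the comparison of $\mcB$ and $\mcB'$ in $\mcD$ along either route, so the orders coincide and the correspondence $\mcB \mapsto \mcB \cap H$ is an isomorphism of posets. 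I would close by remarking that this is exactly why the statement is flagged as an immediate consequence: once $v_b \in H$ is noted, both definitions are literally the image of the chain $\mcD_{v_b}$ under intersection with $H$, so the equivalence is a bookkeeping identity rather than a substantive computation.
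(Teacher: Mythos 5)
There is a genuine gap here, and it comes from misidentifying which ``two definitions'' the lemma is comparing. The paper has already given an \emph{abstract} definition of a restriction to a broken tower diagram in Definition~\ref{D:Diagram}: a poset of $2^S$ whose blocks are all of the form $\{s_1,\dots,s_l\}$ (initial segments of the branch starting at the given vertex) and which satisfies axioms $(1)$--$(3)$ of Definition~\ref{D:staircase}. Definition~\ref{D:Rest} then gives a second, \emph{concrete} definition: $\mcD_{v_b}|_H = \{\mcB \cap H \mid \mcB \in \mcD_{v_b}\}$. The lemma asserts that these two agree, and the substantive point is that every block $\mcB \cap H$ produced by the concrete construction really is an initial segment $\{s_1,\dots,s_l\}$ of the branch. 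That is exactly what the paper's one-line proof establishes: by axiom $(1)$ each $\mcB \in \mcD_{v_b}$ is connected, and in a tree a connected set containing both $s_1 = v_b$ and $s_m$ must contain every $s_i$ with $1 \le i \le m$, so $\mcB \cap H$ has the required interval form. Your argument never invokes connectivity (axiom $(1)$) anywhere, so it cannot and does not establish this, which is the actual content of the lemma.

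What you prove instead is the commutation identity $\mcD_{v_b}|_H = (\mcD|_H)_{v_b}$, i.e.\ that restricting the tower agrees with taking the tower of the restriction. That identity is true, and your membership argument for it ($v_b \in \mcB \cap H$ iff $v_b \in \mcB$, since $v_b \in H$) is correct. But the paper never defines a restriction to a broken tower as $(\mcD|_H)_{v_b}$, so this is a comparison between one of the paper's definitions and an object of your own construction; it leaves the relationship between Definition~\ref{D:Diagram} and Definition~\ref{D:Rest} --- the one the lemma is actually about, and the one later results such as Lemma~\ref{tri} rely on when they treat blocks of a restricted broken tower as nested initial segments --- unproved. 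To repair the proof, replace the commutation argument with the connectivity argument: fix $\mcB \in \mcD_{v_b}$ and $s_m \in \mcB \cap H$; since the induced subgraph on $\mcB$ is connected and $\Gamma$ is a tree, the unique path from $v_b$ to $s_m$, which runs along $H$, lies inside $\mcB$, so $\{s_1,\dots,s_m\} \subseteq \mcB \cap H$, and hence $\mcB \cap H$ is of the form $\{s_1,\dots,s_l\}$ as Definition~\ref{D:Diagram} requires.
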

\begin{proof}
	 Over any path blocks containing $s_1$ and $s_m$ must contain $s_i$ for all $1 \leq i \leq m$ by axiom $(1)$.
\end{proof}

\begin{lemma}\label{l:rest}
For any staircase diagram $\mcD$, the restriction to an broken tower diagram is a broken tower diagram.
\end{lemma}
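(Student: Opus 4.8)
The plan is to check directly that $\mcD_{v_b}|_H$ satisfies the definition of a restriction to a broken tower diagram: that each of its members is an initial segment of the branch and that axioms (1)--(3) hold. The one structural input I would lean on throughout is that $\mcD_{v_b}$ is a chain, which is axiom (2) for $\mcD$ applied at the branching vertex $v_b$; this makes $\mcD_{v_b}|_H$ an order-preserving image of a totally ordered set, so most of the axioms become nearly automatic and the work concentrates in a single place.

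First I would determine the shape of the restricted blocks. Label the branch as a path $v_b = s_1, s_2, \dots, s_m$ running from the branching vertex out to the leaf, and fix $\mcB \in \mcD_{v_b}$. Then $\mcB$ is connected (axiom (1) for $\mcD$) and contains $v_b$. Since $\Gamma$ is a star graph and hence a tree, a connected induced subgraph is closed under taking the unique path between any two of its vertices; and because $H$ is attached to the remainder of $\Gamma$ only at the cut vertex $v_b$, the unique path from $v_b$ to any $s_j \in \mcB$ stays inside $H$. Hence $\mcB \cap H$ is closed under passing back toward $v_b$, forcing $\mcB \cap H = \{s_1, \dots, s_l\}$ for some $l$, exactly as in the previous lemma. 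This is the required block shape, and it already yields axiom (1) for the restriction: each block is a subpath, hence connected, and the union of two nested initial segments is just the longer one, again connected, so every covering pair has connected union.

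Axiom (2) then costs nothing, since $(\mcD_{v_b}|_H)_s$ is a subset of the chain $\mcD_{v_b}|_H$ and is therefore itself a chain. For axiom (3), take adjacent branch vertices $s_i, s_{i+1}$ with $s_{i+1}$ the farther of the two from $v_b$. The initial-segment description shows that a block reaches $s_{i+1}$ only if it reaches $s_i$, so $(\mcD_{v_b}|_H)_{s_{i+1}} \subseteq (\mcD_{v_b}|_H)_{s_i}$; their union is thus $(\mcD_{v_b}|_H)_{s_i}$, which is a chain, and in which $(\mcD_{v_b}|_H)_{s_i}$ is trivially saturated. It remains to see that $(\mcD_{v_b}|_H)_{s_{i+1}}$ is saturated, and here I would transport the question back to $\mcD$: a saturation failure would exhibit tower blocks $\mcB_a \prec \mcB_b \prec \mcB_c$ with $\mcB_a, \mcB_c$ reaching $s_{i+1}$ but $\mcB_b$ reaching only $s_i$, i.e. $\mcB_a, \mcB_c \in \mcD_{s_{i+1}}$ while $\mcB_b \in \mcD_{s_i} \setminus \mcD_{s_{i+1}}$ sits strictly between them. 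This contradicts the saturation of $\mcD_{s_{i+1}}$ in $\mcD_{s_i} \cup \mcD_{s_{i+1}}$, which is precisely axiom (3) for $\mcD$.

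The step I expect to be the genuine obstacle is not any of these three checks but the poset structure on $\mcD_{v_b}|_H$ itself. Distinct blocks of the tower can coincide after intersecting with $H$, since they may differ only on branches other than $H$, so before the axiom checks are even meaningful one must settle how such coincidences are recorded and why the relation inherited from $\mcD_{v_b}$ descends to a well-defined order on the image. I would handle this by phrasing the entire argument in terms of representatives in the honest total order on $\mcD_{v_b}$ and only afterward passing to $\mcD_{v_b}|_H$, so that every use of ``strictly between'' refers to the chain $\mcD_{v_b}$; isolating this passage from the chain to its restriction, and confirming its compatibility with antisymmetry, is where the care is needed.
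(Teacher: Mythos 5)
Your proof is correct and takes essentially the same route as the paper's: the paper's entire proof is the one-line assertion that $\mcD_{v_b}|_H$ satisfies axioms (1)--(3) and that every block of $\mcD_{v_b}$ contains $v_b$, which you have simply carried out in detail (initial-segment form via the tree structure, chain and saturation checks transported back to $\mcD$). Your closing observation---that distinct blocks of $\mcD_{v_b}$ may have equal intersections with $H$, so the induced order must be handled via representatives in the chain $\mcD_{v_b}$ rather than on the bare set of intersections---flags a genuine subtlety that the paper's terse proof passes over silently, and your resolution of it is sound.
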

\begin{proof}
$\mcD_{v_b}|H$ satisfies axioms $(1) - (3)$ of the staircase diagram definition and each $\mcB \in \mcD_{v_b}$ has $v_b \in \mcB$.
\end{proof}
\begin{definition}\label{D:Reg}
Let $\mcD_{v_b}|_H$ be a restriction to a broken tower diagram over a branch $H$ of $\Gamma$. We define the regular part to be the set of blocks $Reg(\mcD) = \{\mcB \in \mcD | \mcB \notin \mcD_{v_b}|_H\} \cup (\mcB_l \setminus \mcB_{l+1})$ or $Reg(\mcD) = \{\mcB \in \mcD | \mcB \notin \mcD_{v_b}|_H\} \cup (\mcB_l \setminus \mcB_{l-1})$ (where $\mcB_l$ is the maximum block in the broken tower and $\mcB_{l \pm 1}$ is the block $1$ above or below $\mcB_l$ in the poset) with either $(\mcB_l \setminus \mcB_{l \pm 1}) \prec \mcB_1$ or  $(\mcB_l \setminus \mcB_{l \pm 1}) \succ \mcB_1$ (with $\mcB_1$ being the first block along the branch $\Gamma$).
\end{definition}
 \begin{lemma}
 The regular part of a staircase diagram is always a staircase diagram over Dynkin diagrams of type $A_n$.
 \end{lemma}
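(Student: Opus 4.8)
The plan is to verify the four staircase-diagram axioms directly for $Reg(\mcD)$, viewed as a family of blocks over the branch $H$, after first observing that every one of its blocks is an interval in a single path. Since each non-tower block is a connected subset of $\Gamma$ that avoids $v_b$, it lies in one branch with $v_b$ deleted; the ones relevant to $H$ are therefore intervals of the path $P := H \setminus \{v_b\}$. The tip $\mcB_l \setminus \mcB_{l\pm 1}$ also avoids $v_b$, because both $\mcB_l$ and the adjacent tower block $\mcB_{l\pm 1}$ contain $v_b$ by definition of the broken tower, so it too is an interval of $P$. As $P$ is a path, it is a Dynkin diagram of type $A_n$, and it remains only to check that $Reg(\mcD)$, equipped with the order induced from $\prec$ and with the tip inserted immediately below or above $\mcB_1$ as prescribed in Definition~\ref{D:Reg}, satisfies axioms $(1)$–$(4)$.

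For axioms $(1)$–$(3)$ I would argue that they are inherited from $\mcD$. Every block of $Reg(\mcD)$ is an interval, hence connected, giving the first half of $(1)$; whenever $\mcB$ covers $\mcB'$ for two genuine blocks of $\mcD$ the union $\mcB\cup\mcB'$ is connected by axiom $(1)$ for $\mcD$, and the only new covering relations involve the tip, whose union with its neighbour is again an interval of $P$ and hence connected. For $(2)$ and $(3)$, observe that for every vertex $s \in P$ we have $Reg(\mcD)_s \subseteq \mcD_s$ once the tip is identified with the corresponding truncation of $\mcB_l$; since $\mcD_s$ is a chain and adjacency in $P$ coincides with adjacency in $\Gamma$, the chain and saturation conditions pass to $Reg(\mcD)$.

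The real content is axiom $(4)$, and the definition of the tip is engineered precisely to secure it. For a non-tower block $\mcB$, axiom $(4)$ for $\mcD$ provides vertices witnessing that $\mcB$ is a minimum of some $\mcD_s$ and a maximum of some $\mcD_{s'}$; I would show such witnesses may be taken inside $P$, using $\mcB \subseteq P$ together with the fact that deleting the tower — every block of which meets $v_b$ — cannot destroy a witness supported away from $v_b$. The blocks whose only witness passed through $v_b$ are exactly the lower tower blocks, which are discarded and replaced by the single tip. The tip exposes the vertices lying in $\mcB_l$ but not in $\mcB_{l\pm 1}$, and for each such vertex the tip is the unique block of $Reg(\mcD)$ containing it, hence automatically an extreme element of the corresponding chain; inserting it directly below $\mcB_1$ (or directly above, in the second case of Definition~\ref{D:Reg}) supplies the complementary extremal relation at its inner endpoint, completing $(4)$ for the tip.

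The step I expect to be the main obstacle is the bookkeeping at the junction between the tower and the regular part: one must confirm that after discarding the lower tower blocks and inserting the tip the vertex where the tip begins still has a saturated subchain relative to its neighbour, and that the new extremal roles of the tip do not collide with those of $\mcB_1$. This is also where the degenerate case of several maximum-length tower blocks must be handled, $\mcB_l$ then denoting the whole set of such blocks; I would treat their common protruding interval as the tip and check that the chain condition is not violated where these equal-length blocks branch. Once these local checks are carried out, axioms $(1)$–$(4)$ all hold and $Reg(\mcD)$ is a staircase diagram over the type $A_n$ path $P$.
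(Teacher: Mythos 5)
Your overall strategy---check axioms (1)--(4) directly and argue that they are inherited from $\mcD$, with the path structure giving type $A$---is the same as the paper's own (very terse) proof, and your handling of axioms (1)--(3) and of the non-tower blocks in axiom (4) is fine. The genuine gap is the claim you lean on to get axiom (4) for the tip block: that for each vertex of $\mcB_l \setminus \mcB_{l\pm 1}$ the tip is the \emph{unique} block of $Reg(\mcD)$ containing it. That is false in general, because a non-tower block of the regular part may overlap the outer end of $\mcB_l$. Concretely, take the star graph with branching vertex $v_b = s_0$, one branch $s_1, s_2, s_3, s_4$ and a second branch consisting of a single vertex $t_1$, and the staircase diagram $B \prec T_1 \prec T_2$ with $B = \{s_2,s_3,s_4\}$, $T_1 = \{s_0,s_1,s_2\}$, $T_2 = \{s_0,s_1,t_1\}$ (one checks all four axioms hold). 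The broken tower over this branch is $\{s_0,s_1,s_2\} \prec \{s_0,s_1\}$, so $\mcB_l = \{s_0,s_1,s_2\}$, and the choice $\mcB_{l+1} = \{s_0,s_1\}$ gives the tip $\{s_2\}$. Its unique vertex $s_2$ also lies in $B$, so your uniqueness claim fails; worse, with this choice $Reg(\mcD) = \{\{s_2\}, B\}$ is \emph{not} a staircase diagram at all, since $\{s_2\}$ is nested inside $B$ and hence can be the minimum or the maximum of the chain over $s_2$, but never both, violating axiom (4) of Definition~\ref{D:staircase} for either insertion of the tip into the order.

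The lemma survives in this example only because Definition~\ref{D:Reg} permits the other choice, $\mcB_{l-1}$, which here does not exist, so that the tip is all of $\mcB_l$---but that simultaneously defeats your opening claim that the tip avoids $v_b$ and lies in $P = H \setminus \{v_b\}$ (the same happens whenever the broken tower consists of a single block, e.g.\ for a diagram with one block). So the actual content of the lemma is exactly the part your final paragraph defers as ``the main obstacle'': one must show that the prescribed choice of $+1$ versus $-1$ (equivalently, the valid-join condition $\mcB_l \setminus \mcB_{l\pm1} \setminus S(\mcD_2 \setminus \mcB_2) \neq \emptyset$ of Definition~\ref{D:J}) leaves the tip with a private vertex, which is what makes it simultaneously a minimum and a maximum of some chains. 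As written, your argument proves the conclusion for both choices, which my example shows cannot be right; the junction analysis is not optional bookkeeping but the crux. (In fairness, the paper's own one-line proof glosses over this point entirely as well.)
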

 \begin{proof}
 Since the regular part is along a path it must be of type $A_n$. Furthermore, all blocks of $Reg(\mcD)$ satisfies axioms $(1)-(4)$ of \ref{S:staircase_def} because they are a connected subset of a diagram $\mcD$.
 \end{proof}
 We can decompose the previous example to obtain the following broken tower diagram (which can be further decomposed into restrictions to broken towers along the $3$ branches) as well as $3$ regular parts (in this case, one regular part is empty).
\begin{example}
The broken tower diagram of \ref{ex1} is:\\
\begin{center}
	\begin{tikzpicture}[scale=0.5]
		 \planepartition{}{ {0,0,0,0,0,2,0,0,0},{0,0,0,0,1,1,1,1},{0,0,0,1,1,1,1}}.

	\end{tikzpicture}
\end{center}

With the $3$ regular parts being the empty-set and:
\begin{center}
\begin{tikzpicture}[scale=0.5]
            \planepartition{}{ {0,0,0,0,0,0,0,0,1,1},{0,0,0,0,0,0,0,0,0,1,1},{0,0,0,0,0,0,0}}.
        \end{tikzpicture}

\end{center}

\begin{center}
\begin{tikzpicture}[scale=0.5]
            \planepartition{}{ {0,0,0,0,0,0,0,0,0,0},{0,0,1,0,0,0,0,0,0,0,0},{0,0,0,0,0,0,0}}.
        \end{tikzpicture}
\end{center}

The restriction to a broken tower diagram on each branch would be the following $3$ diagrams. You'll notice we can piece together the entire staircase from this data.
\begin{center}
	\begin{tikzpicture}[scale=0.5]
		 \planepartition{}{ {0,0,0,0,0,2,0,0,0},{0,0,0,0,0,1,0,0},{0,0,0,0,0,1,0}}.
		 	\end{tikzpicture}
\end{center}
\begin{center}
	\begin{tikzpicture}[scale=0.5]

\planepartition{}{ {0,0,0,0,0,1,0,0,0},{0,0,0,0,0,1,1,1},{0,0,0,0,0,1,1}}.

\end{tikzpicture}
\end{center}
\begin{center}
	\begin{tikzpicture}[scale=0.5]

		 \planepartition{}{ {0,0,0,0,0,1,0,0,0},{0,0,0,0,1,1,0,0},{0,0,0,1,1,1,0}}.
\end{tikzpicture}
\end{center}

\end{example}

 We now define the idea of a join and glue of $2$ diagrams. The idea being we can take the join of a regular part and a restriction to a broken tower to get back to a branch of the original staircase diagram after decomposing it into the $2$ parts. Similarly, gluing takes multiple branches of a diagram $\mcD$ and returns the staircase diagram with the branches ``glued'' along the branching vertex.

 \begin{definition}\label{D:J}
Given a restriction of broken tower diagram $\mcD_1$ (over some branch) and a regular part $\mcD_2$ over the same branch we define the join of the diagrams to be the diagram obtained by joining the largest block of $\mcD_1$ and the first block of $\mcD_2$, under the condition that $\mcB_l \setminus \mcB_{l \pm 1} \setminus S(\mcD_2\setminus \mcB_2) \ne \emptyset$, $\mcB_1 = \mcB_l \setminus \mcB_{l \pm 1}$ and $S(\mcD_1) \cap S(\mcD_2) \setminus \mcB_{1} = \emptyset$
(where $\mcB_1$ is the first block of $\mcD_2$ and $\mcB_2$ the second, along the branch $\Gamma$).\\

That is, $Join(\mcD_1,\mcD_2)$ is the poset $\mcD_1 \cup (\mcD_2 \setminus \mcB_1)$ with $\mcB_2 \prec \mcB_l$ or $\mcB_l \prec \mcB_2$. In the case that one of the conditions is not satisfied the join does not exist and it is said to be not valid.\\
\end{definition}

\begin{example}
	The join of \ref{ex1} along the main branch ($6$-nodes) is
\begin{center}
\begin{tikzpicture}[scale=0.5]
            \planepartition{}{ {0,0,0,0,0,1,0,0,1,1},{0,0,0,0,0,1,1,1,0,1,1},{0,0,0,0,0,1,1}}.
        \end{tikzpicture}
\end{center}
	the ``union'' of the restricted tower diagram and the regular part.

\end{example}

\begin{definition}\label{D:G}
Given $2$ staircase diagrams (or broken tower/ restriction broken tower diagrams) containing $v_b$, $\mcD_1$ and $\mcD_2$, we denote $Glue(\mcD_1, \mcD_2)$ to be the diagram where the diagrams $\mcD_1$ and $\mcD_2$ are glued together at the vertex $v_b$. If $\mcB_1,\cdots, \mcB_n$ are blocks (in poset order) over the restriction to a broken tower diagram of $\mcD_1$ and $b_1, \cdots, b_m$, over $\mcD_2$, then
$$Glue(\mcD_1, \mcD_2) = \bigcup_{i \in \mathbb{Z}}(\mcB_i \cup b_{\rho(i)}) \cup Reg(\mcD_1) \cup Reg(\mcD_2).$$
With the order induced from $\mcD1$ and $\mcD2$,and $\rho(\alpha) = \alpha + x$, for some $x$ with $(-m \leq x \leq n)$ and $\mcB_i = \emptyset$ or $b_i = \emptyset$ for values of $i$ not between $1$ and $m$ or $n$, respectively. Given $n$ diagrams $\mcD_i$, $Glue_{i}(\mcD_i)$ is defined to be $Glue(\mcD_1, Glue (\mcD_2, \cdots Glue (\mcD_{n-1},\mcD_{n}) \cdots )$.
\end{definition}

For example, gluing the $3$ restricted tower diagrams above gives the broken tower diagram:
\begin{center}
	\begin{tikzpicture}[scale=0.5]
		 \planepartition{}{ {0,0,0,0,0,2,0,0,0},{0,0,0,0,1,1,1,1},{0,0,0,1,1,1,1}}.

	\end{tikzpicture}
\end{center}

\begin{lemma}
Note the following:
$$Glue(\mcD_1, Glue(\mcD_2,\mcD_3)) = Glue(Glue(\mcD_1,\mcD_2), \mcD_3).$$
Thus $Glue(\mcD_1, \cdots, \mcD_n)$ is well-defined.

\end{lemma}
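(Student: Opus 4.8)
The plan is to separate each glued diagram into its \emph{tower part} (the blocks containing $v_b$) and its \emph{regular parts} (the blocks of each $Reg(\mcD_i)$, which lie along the branches away from $v_b$), and then to check that the two bracketings agree on each piece. First I would observe that gluing never alters a regular part: by Definition~\ref{D:G} the sum $\bigcup_i(\mcB_i \cup b_{\rho(i)})$ only combines tower blocks, while $Reg(\mcD_1)$ and $Reg(\mcD_2)$ are adjoined unchanged. Consequently both $Glue(\mcD_1, Glue(\mcD_2,\mcD_3))$ and $Glue(Glue(\mcD_1,\mcD_2),\mcD_3)$ have regular part exactly $Reg(\mcD_1)\cup Reg(\mcD_2)\cup Reg(\mcD_3)$, so this portion of the diagram is manifestly independent of the bracketing.

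It then remains to compare the tower parts. Writing the tower blocks of $\mcD_1,\mcD_2,\mcD_3$ in poset order along the $v_b$-chain as $(\mcB_i)$, $(b_j)$ and $(c_k)$, each individual gluing is controlled by a single integer offset $\rho(\alpha)=\alpha+x$. Let $x$ be the offset aligning $\mcD_2$ against $\mcD_1$ and let $y$ be the offset aligning $\mcD_3$ against $\mcD_2$, with the convention that out-of-range blocks are taken to be empty. I would show that in either bracketing the $i$-th block of the resulting tower is $\mcB_i \cup b_{i+x} \cup c_{i+x+y}$: in the left-associated glue one first forms $\mcB_i \cup b_{i+x}$ and then unions $c_{i+x+y}$, whereas in the right-associated glue one first forms $b_{i+x}\cup c_{i+x+y}$ and then unions $\mcB_i$. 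Since set union is associative and commutative, these two expressions are literally equal, so the block \emph{sets} coincide. The order is in each case the one induced along the common $v_b$-chain, and once the alignments agree the induced orders coincide as well.

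The main obstacle is to justify that the same pair of offsets $(x,y)$ governs both bracketings — in other words, that the alignment offset between two towers sharing $v_b$ is forced, and that these offsets compose additively. I would argue uniqueness of the alignment from axioms $(2)$ and $(3)$ of Definition~\ref{D:staircase}: since $\mcD_{v_b}$ is a chain and, for adjacent $s\adj t$, $\mcD_s$ and $\mcD_t$ are saturated subchains of $\mcD_s\cup\mcD_t$, the relative vertical position of the blocks of one tower inside another is determined, leaving no freedom in $x$ beyond the forced value. Granting uniqueness, additivity is immediate from transitivity of the relation ``the block at level $i$ of $\mcD_1$ sits at level $j$ of the glued tower'': the offset aligning $\mcD_3$ to the already-glued $Glue(\mcD_1,\mcD_2)$ agrees with $y$ on the $\mcD_2$-component and hence with $x+y$ on the $\mcD_1$-component, and the symmetric statement holds for the other bracketing. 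With the offsets matched, the block-wise computation above closes the argument, and the stated associativity — and therefore the well-definedness of $Glue(\mcD_1,\dots,\mcD_n)$ — follows.
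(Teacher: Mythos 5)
The paper offers no proof of this lemma at all --- it is asserted as a ``note'' --- so the comparison here is between your argument and the definition itself. Your first two paragraphs capture what is surely the intended content: regular parts pass through gluing untouched, and on the towers the block at level $i$ in either bracketing is $\mcB_i \cup b_{i+x} \cup c_{i+x+y}$, so the equality reduces to associativity of set union together with additive composition of the index shifts. That part is sound, modulo one piece of bookkeeping you gloss over: the nonempty blocks of an inner glue are re-indexed from $1$ in poset order before the outer glue is applied, so the offsets compose additively only up to this normalization shift.

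The genuine gap is in your third paragraph. The claim that the offset is forced by axioms $(2)$ and $(3)$ of Definition~\ref{D:staircase} is false. Definition~\ref{D:G} says $\rho(\alpha)=\alpha+x$ ``for some $x$'': the offset is a free parameter, and different choices give genuinely different, yet perfectly valid, staircase diagrams. Concretely, take $\Gamma$ to be the path $u$ --- $v_b$ --- $w$, with $\mcD_1 = \{\{u,v_b\}\}$ and $\mcD_2 = \{\{w,v_b\}\}$. Offset $x=0$ produces the single block $\{u,v_b,w\}$, while offsets $x=\pm 1$ produce the two-block chains $\{u,v_b\}\prec\{w,v_b\}$ and $\{w,v_b\}\prec\{u,v_b\}$; all three outcomes satisfy axioms $(1)$--$(4)$, so the chain and saturation conditions do not pin down $x$. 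Your appeal to those axioms implicitly assumes the two towers already sit inside a common ambient staircase diagram (as they do when Glue is invoked in Theorem~\ref{bijection}), but the lemma concerns gluing two abstract diagrams, where no ambient structure exists. The repair is not uniqueness but matching of choices: Glue is really a family of operations parameterized by the offset, and associativity should be read as saying that the choice pair $(x,y)$ on one side corresponds, after re-indexing, to the pair $(x,\,x+y)$ on the other, yielding identical diagrams --- equivalently, the two bracketings produce the same set of possible diagrams. With that reformulation, the block-wise computation in your second paragraph is a complete proof, and no uniqueness claim is needed.
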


\section{Decomposing Staircases}
We now prove that every staircase can be decomposed into a broken tower and regular part along each branch (with a choice of arrow for each branch). Furthermore, $n$ triples of regular branch, arrow and broken tower uniquely determine a star graph on with $n$ branches.
\begin{theorem}\label{bijection}
There is a bijection between staircase diagram over a star graph $G$ with $n$ branches and $n$ triples of regular part,
restriction to a broken tower diagrams, choice of $\uparrow, \downarrow$ of $G$ with the join of the regular parts and broken tower diagrams valid.
\end{theorem}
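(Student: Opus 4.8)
The plan is to construct the bijection explicitly in both directions and verify they are mutually inverse. First I would define the forward map $\Phi$ that takes a staircase diagram $\mcD$ over the star graph $G$ to an $n$-tuple of triples. For each branch $H_i$ of $G$ (including the main branch), I form the restriction to the broken tower diagram $\mcD_{v_b}|_{H_i}$ using Definition~\ref{D:Rest}, and the regular part $Reg(\mcD)$ along $H_i$ using Definition~\ref{D:Reg}. The choice of arrow $\uparrow$ or $\downarrow$ records whether the regular part attaches via $(\mcB_l \setminus \mcB_{l+1})$ or $(\mcB_l \setminus \mcB_{l-1})$, i.e.\ whether the regular part sits above or below the broken tower in the poset order along that branch. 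By Lemma~\ref{l:rest} each restriction is a genuine broken tower diagram, and by the lemma following Definition~\ref{D:Reg} each regular part is a genuine staircase diagram of type $A$, so $\Phi$ lands in the claimed target set. I would also need to check that the join along each branch is valid, which amounts to confirming that the three conditions of Definition~\ref{D:J} hold for the pieces produced by the decomposition; this follows because $\mcD$ itself satisfies axioms $(1)$–$(4)$, so no block is duplicated and the attaching block $\mcB_l \setminus \mcB_{l\pm 1}$ is genuinely nonempty.

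Next I would define the inverse map $\Psi$. Given $n$ triples, I first reconstruct each branch by forming $Join(\mcD_1^{(i)}, \mcD_2^{(i)})$ of the broken tower and regular part using Definition~\ref{D:J} with the prescribed arrow, which recovers a staircase diagram along the path $H_i$. I then apply $Glue$ over all $n$ branches at the branching vertex $v_b$ via Definition~\ref{D:G}, using the well-definedness (associativity) already established in the lemma preceding Section~3. The resulting object $\Psi(\text{triples})$ is a diagram over $G$, and I would verify it satisfies the four staircase axioms: axioms $(1)$–$(3)$ along each branch come from the join being a valid staircase diagram over $A_n$, and the interaction at $v_b$ is controlled by the glue operation matching up the broken towers through the reindexing $\rho$; axiom $(4)$ (every block is minimal or maximal in some $\mcD_s$) must be checked globally since gluing can in principle create a block that fails to ``see'' up or down — here the hypothesis that each branch's broken tower shares the common tower $\mcD_{v_b}$ is what guarantees the blocks through $v_b$ remain saturated chains in every adjacent pair.

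I would then prove $\Phi \circ \Psi = \mathrm{id}$ and $\Psi \circ \Phi = \mathrm{id}$. The key structural fact making both composites the identity is that the decomposition of a staircase along a path into broken tower plus regular part is itself invertible via join (this is essentially the content of the join/decomposition setup and the example following Definition~\ref{D:J}), and that the broken tower diagram over $v_b$ is the glue of its restrictions to the individual branches (illustrated by the glued example preceding the associativity lemma). So restricting then joining recovers each branch, and gluing then restricting recovers each branch's broken tower, giving $\Psi\circ\Phi=\mathrm{id}$; conversely starting from abstract triples, joining then decomposing returns the same regular part, broken tower, and arrow, and gluing then restricting to a branch returns the same broken tower, giving $\Phi\circ\Psi=\mathrm{id}$.

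The main obstacle I expect is verifying axiom $(4)$ for the glued diagram, and more generally checking that the glue of branch diagrams that are individually valid is globally a valid staircase diagram rather than merely a valid object along each branch. The subtlety is that the branching vertex $v_b$ is adjacent to the first vertex of every branch simultaneously, so axiom $(3)$ (saturated subchains for adjacent vertices) imposes a compatibility constraint across all branches at once that is not visible from any single branch; I would need to argue that because every branch's broken tower restricts from the common tower $\mcD_{v_b}$, the reindexing $\rho$ in $Glue$ aligns these towers consistently and no spurious intermediate block is introduced at $v_b$. A secondary technical point is bookkeeping the arrow choices so that the attaching block is correctly identified as either above or below, ensuring the join and its inverse are exact inverses rather than differing by a reflection of the poset.
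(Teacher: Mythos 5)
Your proposal takes essentially the same approach as the paper's own proof: decompose each branch into the triple (restriction to a broken tower $\mcD_{v_b}|_{A_i}$, regular part $Reg(\mcD|_{A_i})$, arrow), and reconstruct via $Glue_{i=1}^{n}(Join(S_i,T_i))$. If anything, your outline is more careful than the paper's, which simply asserts that the glued joins form a unique staircase diagram without the axiom-checking at the branching vertex that you correctly identify as the main point needing verification.
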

\begin{proof}
Any connected staircase diagram $\mcD$ over a star graph $G$ may be decomposed uniquely into $n$ triple of regular part, restriction to a broken tower diagram, and choice of arrow as follows:
Let the branches be $A_i$ with branch vertex $v_b$. For each branch ($1 \leq i \leq n$)we use the following triple:
The broken tower diagram is $\mcD_{v_b}|_{A_i}$.
The regular part is $Reg(\mcD|_{A_i})$.
The choice of arrow is defined by whether $(\mcB_l \setminus \mcB_{l \pm 1}) \prec \mcB_1$ or  $(\mcB_l \setminus \mcB_{l \pm 1}) \succ \mcB_1$.
Since each of these are defined uniquely this is indeed unique and every connected staircase diagram can be decomposed this way.\\

For the other direction assume we are given $n$ triples of regular part $S_i$, restriction to broken tower diagrams $T_i$ and an arrow. Then
$$Glue_{i = 1}^{n}(Join(S_i,T_i))$$
is a unique staircase diagram. Here each $\uparrow\downarrow$ denotes whether  $\mcB_2 \prec \mcB_l$ or $\mcB_l \prec \mcB_2$ in the join.
\end{proof}
The advantage of this is that we can focus on enumerating broken parts and regular parts almost individually. The following lemma determines the form of all broken tower diagrams. The blocks must form and increasing then decreasing sequence in length. An example of a potential broken tower diagram is
\begin{center}
	\begin{tikzpicture}[scale=0.5]
		 \planepartition{}{ {0,0,0,0,0,1,0,0,0},{0,0,0,0,1,1,0,0},{0,0,0,1,1,1,0}}.

	\end{tikzpicture}
\end{center}

while a non-example (as disproved in \ref{tri}) is
\begin{center}
	\begin{tikzpicture}[scale=0.5]
		 \planepartition{}{ {0,0,1,1,1,0,0,0,0},{0,0,0,1,1,0,0,0},{0,0,1,1,1,0,0}}.

	\end{tikzpicture}
\end{center}

\begin{lemma}\label{tri}
Let $\mcD$ be a restriction of a broken tower diagram on the graph $\Gamma$ with vertices $s_1,s_2, \cdots s_p$  (the branch vertex is $v_b$) blocks $\mcB_1 \prec \mcB_2 \cdots \prec \mcB_n \in \mcD$ each with cardinality $|\mcB_i| = l_i$. Then there exist $i,j \in \mathbb{N}$ such that $l_1 \leq l_2 \leq \cdots < l_i = l_{i+1}, \ldots, l_{i+j} > \ldots \geq l_{n-1} \geq l_n$.
\end{lemma}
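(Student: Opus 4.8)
The plan is to first pin down the shape of the blocks, then read off a single combinatorial constraint from axiom (3), and finally deduce unimodality by ruling out ``valleys'' in the length sequence. Recording first the shape: by Definition~\ref{D:Diagram} every block of a restriction to a broken tower on the path $s_1, \dots, s_p$ is one of the sets $\{s_1, \dots, s_l\}$; equivalently, since each block contains $v_b = s_1$ (Lemma~\ref{l:rest}) and is connected along the path by axiom (1), it is an initial segment. Thus $\mcB_i = \{s_1, \dots, s_{l_i}\}$ is determined by its length, the blocks are nested by inclusion, and for every vertex $s_k$ we have
\[
\mcD_{s_k} = \{\, \mcB_i : l_i \geq k \,\},
\]
so that $\mcD_{s_{k+1}} \subseteq \mcD_{s_k}$ for each $k$. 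The poset order is the given one $\mcB_1 \prec \cdots \prec \mcB_n$, and the lengths $l_i$ are indexed along it.

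Next I would extract the governing constraint. For adjacent vertices $s_k \adj s_{k+1}$, the inclusion $\mcD_{s_{k+1}} \subseteq \mcD_{s_k}$ makes $\mcD_{s_k} \cup \mcD_{s_{k+1}} = \mcD_{s_k}$, so axiom (3) reduces to the assertion that $\mcD_{s_{k+1}}$ is a \emph{saturated} subchain of $\mcD_{s_k}$. Unwinding the definition of saturated and noting that $\mcD_{s_k} \setminus \mcD_{s_{k+1}} = \{\mcB_i : l_i = k\}$, this says precisely: \emph{no block of length exactly $k$ lies strictly between (in $\prec$) two blocks of length $\geq k+1$}. This is the one fact about the length sequence I would carry forward.

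Finally I would prove unimodality by contradiction, using the elementary fact that a finite sequence $(l_i)$ is unimodal (weakly increasing then weakly decreasing) if and only if there are no indices $a<b<c$ with $l_a > l_b < l_c$. Suppose such a valley existed, say $a<b<c$ with $l_a > l_b < l_c$. Put $k := l_b$; then $l_a \geq k+1$ and $l_c \geq k+1$, while $\mcB_b$ has length exactly $k$ and, since $a<b<c$, satisfies $\mcB_a \prec \mcB_b \prec \mcB_c$. As $l_a \leq p$ the vertex $s_{k+1}$ exists, so the constraint of the previous paragraph applies to the adjacent pair $(s_k, s_{k+1})$ and is violated by $\mcB_b$ lying strictly between $\mcB_a, \mcB_c \in \mcD_{s_{k+1}}$. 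This contradiction forces $(l_i)$ to be unimodal, which is exactly the asserted form $l_1 \leq \cdots < l_i = \cdots = l_{i+j} > \cdots \geq l_n$.

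I expect the only delicate step to be the faithful translation of the abstract ``saturated subchain'' condition in axiom (3) into the concrete no-valley statement for the length sequence; once that dictionary is in place, the contradiction is immediate. A minor point to verify along the way is that the neighbouring vertex $s_{k+1}$ is always present, which holds because every block length is at most $p$.
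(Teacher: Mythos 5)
Your proof is correct, and it isolates the right mechanism: after observing that every block of a restriction to a broken tower is an initial segment $\{s_1,\dots,s_l\}$ of the branch (so that $\mcD_{s_{k+1}}\subseteq \mcD_{s_k}$ for every $k$), axiom (3) for the adjacent pair $(s_k,s_{k+1})$ says exactly that no block of length $k$ lies strictly between two blocks of length at least $k+1$, and a valley $l_a>l_b<l_c$ with $a<b<c$ violates this at once upon taking $k=l_b$. This is the same key fact the paper exploits --- every case of the paper's argument ends by exhibiting precisely such a saturation violation --- but your architecture is genuinely different: the paper proceeds by induction on the number of blocks, inserting a new block $\mcB_q$ between $\mcB_{k-1}$ and $\mcB_k$ and splitting into three cases according to where the insertion point falls relative to the plateau of maximal blocks, whereas you argue directly by contradiction via the standard equivalence between unimodality and the absence of valleys. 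Your route buys brevity and rigor: it avoids the case analysis entirely, and it sidesteps a point the paper leaves implicit, namely that every valid $(n+1)$-block diagram arises by inserting a block into a valid $n$-block diagram (equivalently, that deleting a block preserves axioms (1)--(3)); the paper's inductive formulation, for its part, makes explicit where a new block may legally be inserted into an existing tower, which is closer in spirit to how later sections build and parametrize towers. The two points you flag as delicate --- the translation of saturation into the no-valley statement, and the existence of the vertex $s_{k+1}$, guaranteed by $l_a\le p$ --- are both handled correctly in your write-up.
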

\begin{proof}
	We proceed by induction on $n$. For $n = 1$ the statement holds. Now suppose we have a restriction of broken tower staircase diagram of with $n$ blocks ordered by $\mcB_1 \prec \mcB_2 \cdots \prec \mcB_n \in \mcD$. And suppose we insert another block, $\mcB_q$ with $|\mcB_q| = l_q$ into the poset between $2$ blocks $\mcB_{k-1}$ and $\mcB_{k}$. Recall that $i$ and $j$ give the boundaries for the maximum sized blocks from the inductive assumption. We have $3$ cases, $k < i$, $i \leq k \leq i+j+1$ or $k > i+j+1$ and we will now show that in each case we can find new $i,j$ to satisfy the theorem if $\mcD$ is still a restriction of broken tower staircase diagram.

{\bf Case 1:} If $k<i$ then $l_{k-1} \leq l_{q} \leq l_{k}$, otherwise we have $l_{k-1} > l_q$ or $l_{n-1} > l_{k}$ the former of which implies $\mcD_{s_{l_q+1}}$ is not a saturated subchain of $\mcD_{s_{l_q}} \cup \mcD_{s_{l_q+1}}$ (since $\mcD_{s_{l_q}} \notin \mcD_{s_{l_q+1}}$ meaning that $\mcB_{s_k} \prec \mcB_{s_{l_q}} \prec \mcB_{s_{k+1}}$)
, breaking axiom $(3)$ of the broken tower staircase diagram definition. The latter implies $\mcD_{s_{l_{k}+1}}$ is not a saturated subchain of $\mcD_{s_{l_{k}+1}} \cup \mcD_{s_{l_{k}}}$ (since $\mcD_{s_{l_k}} \notin \mcD_{s_{l_k+1}}$ but $\mcD_{s_{l_q}} \in \mcD_{s_{l_k+1}}$ meaning that  $\mcB_{s_k} \prec \mcB_{s_{l_q}} \prec \mcB_{s_{k+1}}$), again breaking axiom $(3)$ of the broken tower staircase diagram definition.

{\bf Case 2:} If $i \leq k \leq i+j+1$ then $l_q \geq \min(l_{k},l_{k-1})$. Otherwise we have $l_q < l_{k}$ and $l_q < l_{k+1}$. Then $\mcD_{s_{l_q+1}}$ is not a saturated subchain of $\mcD_{s_{l_q+1}} \cup \mcD_{s_{l_q}}$ since $\mcB_q \in \mcD_{s_{l_{q}}}$ and $\mcB_{k-1} \prec \mcB_q \prec \mcB_k$ but $\mcB_q \notin \mcD_{s_{l_q+1}}$. Again, breaking axiom $(3)$ or the broken tower staircase diagram definition.

{\bf Case 3:} If $k > i+j+1$ then we may repeat Case $1$, mutatis mutandis, by symmetry.
\end{proof}
\begin{definition}
For each block in the restriction to a broken tower diagram we assign an element of the set $\{\emptyset, u, d, u/d \}$ depending on whether the block has a maximal element, minimal element, both or neither ($u, d, u/d$ or $\emptyset$ respectively ). We call the ordered set of such elements the sight-set of the particular broken tower (over the branching node of that tower). If a sight-set consists of only $u/d$ elements then we say it is full and corresponds to a valid tower over the branch node (a tower that is also a staircase diagram of definition \ref{D:staircase}).
\end{definition}

\begin{corollary}\label{l:cor}
The sight-sets over the branch vertex of a restriction to a broken tower diagram is of form $u\{u,\emptyset\}^*\{u/d\}\{d,\emptyset\}^*d$, $u\{u,\emptyset\}^*\{d,\emptyset\}^*d$, ${\{u/d\}\{d,\emptyset\}^*d}$, $u\{u,\emptyset\}^*\{u/d\}$ or $\{u/d\}$. The notation used here is formal language notation for strings containing the $4$ symbols $\{u, d, u/d, \emptyset \}$. We call any particular form the gluing data of broken tower diagram.
\end{corollary}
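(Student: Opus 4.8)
The plan is to read the sight-set directly off the unimodal size sequence that Lemma~\ref{tri} provides. First I would record the shape of a restriction to a broken tower: every block contains the branch vertex $v_b$ and is connected, so along the path $s_1,\dots,s_p$ (with $s_1=v_b$) each block is an initial segment $\{s_1,\dots,s_{l_i}\}$; the blocks are therefore nested by inclusion, even though in the chain order $\mcB_1\prec\cdots\prec\mcB_n$ their sizes $l_i$ need not be monotone. The crucial observation is that for each vertex $s_k$ we have $\mcD_{s_k}=\{\mcB_i : l_i\ge k\}$, and by the unimodality of $(l_i)$ from Lemma~\ref{tri} the index set $\{i : l_i\ge k\}$ is an interval, so $\mcD_{s_k}$ is a contiguous subchain $\mcB_a\prec\cdots\prec\mcB_b$.

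Next I would establish the local rule for the sight symbols. Since $\mcD_{s_k}$ is the contiguous run of indices with $l_i\ge k$, the block $\mcB_i$ is the maximal element of some $\mcD_{s_k}$ exactly when $l_i>l_{i+1}$, and the minimal element of some $\mcD_{s_k}$ exactly when $l_i>l_{i-1}$ (using $l_0=l_{n+1}=0$). Hence, reading the chain from the top block $\mcB_n$ downward, $\mcB_i$ is labelled $u/d$ when $l_i$ strictly exceeds both neighbours, a single $u$ or $d$ when it exceeds exactly one, and $\emptyset$ when it exceeds neither. In particular the top block ($l_n>0$) always carries a $u$ and the bottom block ($l_1>0$) always carries a $d$, so no sight-set can begin or end with $\emptyset$.

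Finally I would push the unimodal shape through this rule. By Lemma~\ref{tri} the sequence weakly increases to a plateau of maximal blocks and then weakly decreases; on the descending side (read first) only $u$ and $\emptyset$ occur, on the ascending side only $d$ and $\emptyset$, while a flat interior step contributes $\emptyset$ on the appropriate side. If the plateau is a single block it is the unique index exceeding both neighbours and contributes the lone $u/d$; if the plateau has width $\ge 2$ its top end contributes $u$, its bottom end contributes $d$, and its interior contributes $\emptyset$, so no $u/d$ appears. A case split—plateau of width $1$ with the peak in the interior, at the top, at the bottom, or filling the whole chain giving forms $1,3,4,5$, and a plateau of width $\ge2$ giving form $2$—then yields exactly the five listed forms. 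The main obstacle I anticipate is bookkeeping the weak inequalities of Lemma~\ref{tri}: I must verify that every flat step, whether below the maximum or inside a wide plateau, is absorbed into the adjacent run of $\{u,\emptyset\}$ or $\{d,\emptyset\}$ on the correct side, and that a plateau of width $\ge2$ genuinely produces no $u/d$, since this is precisely what distinguishes form $2$ from the others.
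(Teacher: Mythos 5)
Your proof is correct and takes essentially the same approach as the paper's: unimodality of the block lengths from Lemma~\ref{tri}, a local neighbour-comparison rule assigning each block its symbol, and a case split on the width and position of the plateau of maximal blocks. The only difference is cosmetic: your assignment ($u$ iff $l_i>l_{i+1}$, string read from the top block down) is the mirror of the paper's ($u$ iff $l_{k-1}<l_k$, read upward), and since the five listed forms are invariant under reversing the string and swapping $u\leftrightarrow d$ the two conventions give the same statement --- with your derivation of the local rule from the interval structure of the sets $\mcD_{s_k}$ being the more carefully justified of the two.
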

\begin{proof}
From lemma \ref{tri} we have that the length or cardinality of the blocks form an increasing then decreasing sequence. For any block $\mcB_{k}$ if $|\mcB_{k-1}| < |\mcB_k|$ then the $k^{th}$ term of the sight-set has a $u$ since $\mcB_k$ has a maximal element. Similarly, if $|\mcB_{k}| > |\mcB_{k+1}|$ then the $k^{th}$ term of the sight-set has a $d$ since $\mcB_k$ has a minimal element. The above forms are a consequence of the increasing then decreasing sequence with $\emptyset$ occurring when $|\mcB_{k-1}| = |\mcB_k| = |\mcB_{k+1}|$
\end{proof}

\begin{theorem}\label{adding lemma}
Blocks added onto a broken tower diagram (via a join to a regular part) don't change the sight set on the branching vertex.
\end{theorem}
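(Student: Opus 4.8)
The plan is to reduce the claim to the length-profile characterization of the sight-set established in Corollary \ref{l:cor}, and then check that a valid join alters neither the profile nor the witnessing vertices that assign each symbol. Recall that, by Definition \ref{D:J}, $Join(\mcD_1,\mcD_2)$ equals the poset $\mcD_1 \cup (\mcD_2 \setminus \mcB_1)$ together with a single new cover relation between $\mcB_2$ and the peak block $\mcB_l$; in particular every block of the broken tower $\mcD_1$ survives unchanged, so the unimodal sequence of tower block lengths $(l_1,\dots,l_n)$ from Lemma \ref{tri} is preserved verbatim. Since Corollary \ref{l:cor} computes the sight symbol of each block $\mcB_k$ purely from whether $l_{k-1}<l_k$ and whether $l_k>l_{k+1}$, this already indicates that the symbol assigned to every \emph{non-peak} tower block is governed by data the join does not touch.

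Concretely, I would first record that by the join condition $S(\mcD_1)\cap S(\mcD_2)\setminus\mcB_1=\emptyset$, none of the appended blocks $\mcB_2,\dots,\mcB_m$ contains $v_b$, nor any tower vertex other than those in the transition tip $\mcB_1=\mcB_l\setminus\mcB_{l\pm1}$. Hence for every vertex $s\in S(\mcD_1)\setminus\mcB_1$ the chain $\mcD_s$ is literally identical before and after the join, so the maximal/minimal status witnessed at such $s$ is untouched. This disposes of all tower blocks except possibly the peak $\mcB_l$, whose transition tip is exactly where the regular part is attached.

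The heart of the argument --- and the step I expect to be the main obstacle --- is verifying that the peak block retains its $u/d$ symbol. Placing $\mcB_2$ immediately above (or below) $\mcB_l$ could a priori strip the peak of one of its two directions at any transition vertex that $\mcB_2$ happens to share. Here I would invoke the remaining join condition $\mcB_l\setminus\mcB_{l\pm1}\setminus S(\mcD_2\setminus\mcB_2)\neq\emptyset$: it furnishes a vertex $s^\ast$ in the transition tip that is \emph{not} absorbed by the part of the regular diagram sitting beyond $\mcB_2$, so $\mcB_l$ remains extremal at $s^\ast$ in the appended direction, while the adjacent tower block $\mcB_{l\pm1}$ continues to witness the opposite direction inside the tower. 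Together these keep $\mcB_l$ labelled $u/d$.

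Finally I would note that appending blocks can only ever remove extremal witnesses, never manufacture new ones for an existing tower block: enlarging $\mcD_s$ by a block placed above (resp. below) a fixed block can only cost that block maximality (resp. minimality) at $s$, never create it. Combined with the two preceding paragraphs, no symbol is destroyed and none is created, so the sight-set over the branching vertex is identical for $\mcD_1$ and $Join(\mcD_1,\mcD_2)$, as claimed. The only genuinely delicate point is the peak analysis, where one must read off precisely what the join's non-degeneracy condition guarantees about a surviving witness.
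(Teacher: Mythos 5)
Your overall strategy is the same one the paper uses: symbols cannot be gained because appended blocks only add coverage, and loss of a symbol is ruled out by exhibiting a surviving extremal witness via the validity conditions of the join. The genuine gap is the step ``this disposes of all tower blocks except possibly the peak $\mcB_l$.'' That claim does not follow from your paragraph on off-tip vertices, because a \emph{non-peak} tower block can have \emph{all} witnesses of one of its symbols inside the transition tip, exactly where the chains do change. Concretely, take the broken tower $\mcB_1=\{s_1,s_2,s_3\}\prec\mcB_2=\{s_1,\dots,s_4\}\prec\mcB_3=\{s_1,\dots,s_5\}\prec\mcB_4=\{s_1,s_2\}$ (lengths $3,4,5,2$; sight-set $d,d,u/d,u$ from bottom to top), with the regular part attached upward, so the tip is $\mcB_3\setminus\mcB_4=\{s_3,s_4,s_5\}$. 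The block $\mcB_2$ carries a $d$ whose \emph{unique} witness is $s_4$, a tip vertex, and a perfectly valid join can have $\mcB_2^{reg}=\{s_4,s_5,s_6\}\succ\mcB_3$, so the chain $\mcD_{s_4}$ genuinely changes. The $d$ of $\mcB_2$ does survive, but for a reason you never state: in $Join(\mcD_1,\mcD_2)$ the only generating relation between old and new blocks is $\mcB_l\prec\mcB_2^{reg}$, so in the joined poset no appended block ever lies \emph{below} any tower block; hence, for upward attachment, minimality can never be destroyed. None of your three arguments (off-tip chains unchanged, the peak analysis, ``appending never creates witnesses'') covers this configuration.

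Once that directional fact is added, your outline closes up cleanly: for upward attachment all $d$'s are automatic; blocks strictly above the peak have length at most $x_1$ by Lemma \ref{tri}, hence are disjoint from the tip and keep their $u$'s by your unchanged-chain argument; blocks below the peak have no $u$ at all; so only the peak's $u$ needs the non-degeneracy condition. On that last point there is a second, smaller slip: the surviving tip vertex must avoid $\mcB_2^{reg}$ \emph{itself}, not merely the blocks ``beyond'' it as you write, since $\mcB_2^{reg}$ sits directly above $\mcB_l$ and a witness inside it is useless; this is exactly the inclusion the paper's own proof extracts from the join condition ($\mcB_i\nsubseteq\mcB_\nu\cup\mcB_{i-1}$, with the new adjacent block included in the union). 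Likewise, ``the adjacent tower block $\mcB_{l\pm1}$ continues to witness the opposite direction'' is not meaningful as stated --- witnesses are vertices, and the peak's opposite-direction witness survives because nothing new is ever placed on that side of the peak, i.e., by the same directional fact you are missing.
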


\begin{proof}
Firstly, we cannot increase the sight set since none of the added blocks are blocks on the branching vertex. Suppose (W.L.O.G) that adding a block onto a broken tower diagram caused a $u$ to be removed from the sight set. The $u$ removed cannot be from $\mcB_1$ since that would imply the branching vertex was covered. Suppose $u$ was removed from $\mcB_i$, then this would imply that the new block (say, $\mcB_\nu$) is adjacent to $\mcB_{i-1}$ in poset order (otherwise there is some chain with an element in $\mcB_\nu$ that is not saturated ). From the valid join condition we have that $\mcB_{i} \nsubseteq \mcB_\nu \cup \mcB_{i-1}$ thus $\mcB_{i}$ still has a maximal block and the $u$ is preserved. Contradiction, the join cannot be valid and sight-set reducing.
\end{proof}

\begin{theorem}\label{height}
$|\mcD_{v_b}|$ is bounded as $n$ varies. Furthermore, $|\mcD_{v_b}| \leq \sum_{i}a_{i} + 1$.
\end{theorem}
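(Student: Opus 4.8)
The plan is to prove the stated inequality by a short injective counting argument, and then to upgrade it to genuine boundedness by relocalizing the count onto the non-main branches, so that the length $a_0$ of the growing main branch drops out.

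For the inequality I would use axioms $(4)$ and $(2)$ of Definition \ref{D:staircase}. By axiom $(4)$, every block $\mcB \in \mcD_{v_b}$ is the minimum element of $\mcD_s$ for some vertex $s \in \mcB$; fix one such vertex and call it $\phi(\mcB)$. Since $\mcD_s$ is a chain by axiom $(2)$, it has a unique minimum, so $\phi(\mcB) = \phi(\mcB')$ forces $\mcB = \min \mcD_{\phi(\mcB)} = \mcB'$. Hence $\phi \colon \mcD_{v_b} \to S$ is injective and $|\mcD_{v_b}| \le |S|$. As the star graph $\Gamma(a_0,\dots,a_k)$ has exactly $1 + \sum_i a_i$ vertices, this gives the claimed bound $|\mcD_{v_b}| \le \sum_i a_i + 1$.

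To see that $|\mcD_{v_b}|$ remains bounded as the main branch grows (i.e.\ as $n \to \infty$ for $E_n$), I would refine the count so that the distinguishing vertices all lie off the main branch. Write $\mcD_{v_b}$ as a chain $\mcB_1 \prec \cdots \prec \mcB_m$ and, on each branch $A_i$, record the extent $|\mcB_j \cap A_i|$; by Lemma \ref{tri} each such extent-sequence is unimodal in $j$. Let $[p,q]$ be the plateau where the main-branch extent is maximal. On the rising part $j < p$ the main-branch extent is nondecreasing, so the main-branch part of $\mcB_j$ sits inside that of $\mcB_{j+1}$ and no main-branch vertex leaves $\mcB_j$; yet axiom $(4)$ forces $\mcB_j$ to be the maximum of some $\mcD_{s'}$, so some $s' \in \mcB_j \setminus \mcB_{j+1}$ must exist, and it is therefore a vertex of a non-main branch. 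By uniqueness of maxima in a chain these top-vertices are distinct, so the rising part contributes at most $\sum_{i\ge 1} a_i$ blocks. Symmetrically, on the falling part $j > q$ each block contributes a distinct non-main bottom-vertex via its minimum, again at most $\sum_{i \ge 1} a_i$ blocks; and the plateau blocks, differing only on the short branches, form a configuration to which the injective argument of the first paragraph applies on the short-branch star alone, contributing at most $1 + \sum_{i\ge 1} a_i$. Summing, $|\mcD_{v_b}|$ is bounded by a quantity independent of $a_0$ (for $E$-type, where $\sum_{i\ge 1} a_i = 3$, this is an absolute constant), which is the asserted boundedness.

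The main obstacle is the bookkeeping in this second step rather than any deep idea: one must treat the plateau and the two boundary blocks $\mcB_1, \mcB_m$ carefully, and must justify rigorously that ``something must leave (resp.\ enter)'' the block — which is exactly where axiom $(4)$ is used, since a strictly nested consecutive pair would leave a block with no maximum (resp.\ minimum) vertex. Combining this with the unimodality of Lemma \ref{tri} to pin the departing/arriving vertex onto a short branch is the delicate point, but it is otherwise a routine case analysis.
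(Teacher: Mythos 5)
The gap is quantitative, and it matters. In Theorem \ref{height} the sum $\sum_i a_i$ must be read as running over the branches \emph{other than} the main branch whose length $n$ grows: that is the only reading under which the ``furthermore'' implies the first sentence, it is what the paper's own argument establishes (its count draws only on ``vertices in the other branches''), and it is what the corollary immediately following extracts, namely $|\mcD_{v_b}|\le 3+1=4$ in the $E$-type case, which is precisely what licenses the parametrization by quadruples $(V_1,V_2,V_3,V_4)$ and the case table of Lemma \ref{cases}. Your first paragraph instead proves $|\mcD_{v_b}|\le |S|=1+\sum_{i\ge 0}a_i$, a bound that grows with $n$; the injection is correct, but this is not the intended inequality and is useless downstream. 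Your second paragraph does prove genuine boundedness, but only with constant $3T+2$, where $T=\sum_{i\ge 1}a_i$: you bound the rising part ($\le T$), the plateau ($\le T+1$), and the falling part ($\le T$) separately and then add. For $E$-type that gives $11$, not $4$, so the sharp bound that the rest of the paper actually uses is not established by your proposal.

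Your own machinery closes the gap if you stop summing the three pieces independently. With your notation $\mcB_1\prec\cdots\prec\mcB_m$ and plateau $[p,q]$, unimodality (Lemma \ref{tri}) gives $\mcB_j\cap A_0\subseteq \mcB_{j+1}\cap A_0$ for \emph{every} $j\le q-1$, not just for $j<p$, so your maximum-vertex argument assigns distinct short-branch vertices to all of $\mcB_1,\dots,\mcB_{q-1}$; symmetrically, the minimum-vertex argument assigns distinct short-branch vertices to all of $\mcB_{p+1},\dots,\mcB_m$. A single vertex $s$ can appear in both counts only if $\min\mcD_s$ and $\max\mcD_s$ both have indices in $[p+1,q-1]$, and since $s\mapsto\max\mcD_s$ is injective there are at most $q-1-p$ such vertices. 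Inclusion--exclusion then yields $m\le T$ when $p<q$ and $m\le T+1$ when $p=q$, which is exactly the stated bound. This would make your route a genuinely different, and arguably more elementary, proof than the paper's: the paper restricts to the main-branch broken tower and counts sight-set symbols, noting via Corollary \ref{l:cor} that this tower supplies at most $h+1$ of the $2h$ symbols a full sight-set requires, while each short-branch vertex can supply at most one usable symbol, whence $h-1\le T$. Both arguments rest on the same principle --- each short-branch vertex can certify at most one block of $\mcD_{v_b}$ --- but as written yours loses the sharp constant, and with it the statement the paper needs.
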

\begin{proof}
Let $\mcD_{v_{b}} = \{\mcB_{1},\cdots,\mcB_{h}\}$ with $\mcB_{i} \succ \mcB_{j}$ if $i < j$. Since $\mcD$ is a staircase diagram each block is minimal over some vertex and maximal over some vertex. Consider $\mcD$ restricted to the subgraph consisting of the segment from the vertex $v_b$ to the end of the branch of size $n$. Consider the sight-set of  each restriction to broken tower.  By lemma
\ref{adding lemma} we can further restrict ourselves to the case where $\mcD$  is now a restricted broken tower diagram since adding blocks to broken tower diagrams does not change the sight-set over the branching vertex. The form of the sight set will be one of the forms as described in \ref{l:cor}
(where the size of the ordered set is $h$). Since $\mcD$ is a restriction to a broken tower diagram, we must have $h-1$ symbols (that is, either $u$ or $d$) added when we glue (at $v_b$) the broken tower diagram of $\mcD$ restricted to the other branches of $\Gamma$.\\

 For each branch of $\Gamma$ we can restrict to the broken tower diagram by lemma \ref{adding lemma} and for each vertex $v$ in the other branches we can have $2$ symbols from the minimal and maximal elements in $\mcD_v$. However, from the $2$ symbols coming from $\mcD_v$ only $1$ such symbol will be able to change the sight-set of $\mcD$ restricted, as if we had both begin used this would imply that the minimal block is some $\mcB_{i}$, the maximal block is some $\mcB_{j}$ with $\mcB_{i} \succ \mcB_{j}$ since all the blocks containing $u$ are greater in the poset than blocks containing $d$ (in $\mcD$ restricted). This implies to add $h-1$ symbols we need $\sum_{i}a_i \geq h-1$ or $$h \leq \sum_{i}a_i + 1.$$
\end{proof}

\section{The fully supported $E$-type case}
We now move on to prove the main results of the paper, the complete enumeration of rationally smooth Schubert varieties over type $E$ Dynkin diagrams.

\begin{definition}
We let $E_n$ denote the Dynkin diagram of the corresponding Coxeter group (the finite type Dynkin diagram of type $E$ with $n$ nodes). Let $E(x) = \sum_{i}e_n x^n$ where $e_n$ is the number of fully supported connected staircase diagrams over the graph $E_n$.
\end{definition}
\begin{lemma}\label{sight-set fix}
Suppose we have $2$ restriction to a broken tower diagram $\mcD_1$ and $\mcD_2$ with the same sight-set over a graph with $n$ branches. Then the set of all $n-1$ tuples of pairs of restricted broken towers and regular parts so that the glue and join of them all with the first restriction to a broken tower diagram, $\mcD_1$ (so that a staircase diagram is formed, as described by theorem \ref{bijection}) is the same as that for the second, $\mcD_2$.
\end{lemma}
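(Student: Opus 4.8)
The plan is to reduce the statement to an invariance property of the gluing and joining operations: whether a collection of branch data assembles into a valid staircase diagram depends on the main-branch datum $\mcD_1$ only through its sight-set at $v_b$. Since $\mcD_1$ and $\mcD_2$ are assumed to carry the same sight-set, the two sets of valid completion tuples must then coincide, which is exactly the assertion.

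First I would invoke Theorem \ref{bijection} to translate the phrase ``forms a staircase diagram'' into the concrete conditions that actually need checking: on each of the $n-1$ other branches the regular part must admit a valid join with its broken tower in the sense of Definition \ref{D:J}, and the global gluing $Glue_{i=1}^{n}(Join(S_i,T_i))$ of Definition \ref{D:G} must satisfy axioms $(1)$--$(4)$. I would then separate these into conditions local to a single branch and conditions that couple the branches at $v_b$. The join-validity condition on branch $i$ refers only to that branch's own regular part and broken tower and makes no mention of $\mcD_1$, so it contributes identically to the completion sets attached to $\mcD_1$ and to $\mcD_2$; the only place the main-branch datum can enter is the global glue.

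Next I would analyze that glue vertex by vertex. By Theorem \ref{adding lemma}, joining regular parts onto a broken tower never alters its sight-set over $v_b$, so the only information about $\mcD_1$ that the glue can see is precisely its sight-set. Writing the glued blocks as $\mcB_i \cup b_{\rho(i)}$ for the shift $\rho$ of Definition \ref{D:G}, I would verify the axioms in turn: connectedness (axiom $(1)$) is automatic, since every glued block contains $v_b$ and is connected within each branch; the chain and saturation conditions along each branch (axioms $(2)$,$(3)$) are inherited unchanged from the individual towers; and the ``sees up / sees down'' requirement (axiom $(4)$) holds for $\mcB_i \cup b_{\rho(i)}$ exactly when at least one summand sees up and at least one sees down, a fact read off directly from the two sight-sets. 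Hence the entire list of constraints that the completion tuples and the shift $\rho$ must satisfy is a function of the sight-sets alone.

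The main obstacle I anticipate is confirming that axiom $(3)$ -- the saturated-subchain condition at the vertices adjacent to $v_b$, where the branches actually interact -- depends only on the sight-set and not on finer data of $\mcD_1$, such as which blocks happen to have equal length. I would dispatch this using Lemma \ref{tri} and Corollary \ref{l:cor}: the increasing-then-decreasing length profile forces the saturation behaviour at $v_b$ to be governed entirely by where the profile is strictly rising, flat, or strictly falling, and this is exactly the data recorded by the $u$, $\emptyset$, $d$, and $u/d$ symbols of the sight-set. Once this is in place, equality of the sight-sets of $\mcD_1$ and $\mcD_2$ forces a completion tuple to be valid for one if and only if it is valid for the other, so the two sets are equal.
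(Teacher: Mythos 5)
Your overall strategy mirrors the paper's: both arguments reduce the validity of a completion tuple to data that is (claimed to be) a function of the sight\nobreakdash-set alone, yours by a direct verification of the axioms, the paper's by a three\nobreakdash-line contradiction that simply asserts the glued sight-set is determined by the constituent sight-sets. The problem is that the pivotal step you commit to is false, and it is exactly where the subtlety of this lemma lives. You claim that axiom $(4)$ ``holds for $\mcB_i \cup b_{\rho(i)}$ exactly when at least one summand sees up and at least one sees down.'' The forward direction is fine, but the reverse direction fails, because the sight-set as defined counts the branching vertex $v_b$ itself as a witness for maximality or minimality (it must: the single-block tower $\{\{v_b\}\}$ is a valid staircase diagram, so its sight-set has to be the full one, $u/d$). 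A witness at $v_b$ is fragile under gluing: if $\mcB_i$ is the top block of $\mcD_1$ and equals $\{v_b\}$, then $\mcB_i$ sees up inside $\mcD_1$ (it is the maximum of $(\mcD_1)_{v_b}$), yet once a block from another branch is glued strictly above it, the glued block containing $\mcB_i$ sees up at no vertex unless some summand supplies a witness on a branch. Sight at $v_b$ and sight at branch vertices behave differently under gluing, and the sight-set does not distinguish them.

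Concretely, $\mcD_1 = \{\{v_b\}\}$ and $\mcD_2 = \{\{v_b,s_1\}\}$ have the same sight-set ($u/d$ for the unique block). Take the completion whose two towers are $\{\{v_b,w_1\}\}$ on one branch, glued strictly below, and $\{\{v_b,t_1,t_2\}\}$ on the other, glued strictly above. For $\mcD_2$ the glued diagram $\{v_b,w_1\} \prec \{v_b,s_1\} \prec \{v_b,t_1,t_2\}$ satisfies all four axioms (the middle block is maximal and minimal over $s_1$); for $\mcD_1$ the glued diagram $\{v_b,w_1\} \prec \{v_b\} \prec \{v_b,t_1,t_2\}$ violates axiom $(4)$, since $\{v_b\}$ is neither the maximum nor the minimum of $\mcD_{v_b}$ and contains no other vertex. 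So your biconditional, and with it the conclusion that the two completion sets coincide, breaks precisely when a tower has an extremal block equal to $\{v_b\}$. Repairing the argument requires refining the invariant: one must separate ``sees up/down at a branch vertex'' (which survives gluing) from ``sees up/down only at $v_b$'' (which is purely positional in the glued chain), and then rerun your axiom-$(4)$ analysis with that finer data. To be fair, the paper's own proof silently assumes the very invariance you tried to prove, so it has the same hole; but since your write-up makes the false equivalence explicit, this is the gap that must be flagged.
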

\begin{proof}
Suppose we have $2$ different restrictions to a broken tower with the same sight-set but a  $n-1$ tuple of pairs of diagrams that gives a full sight-set over the branching node of only $\mcD_1$. Then there is some $\mcB \in \mcD_2$ so that when glued with the pairs does not give a $u/d$. Since $\mcD_1$ has the same sight-set as $\mcD_2$ that means that there is some broken tower in the set of $n-1$ with a different sight-set. Contradiction.
\end{proof}
\begin{definition}
Fix a gluing data $t$ across $1$ branch of the star graph $\Gamma$ with $n$ branches. For any gluing data $t$, we define $Cont(t)$ to be the number of $n-1$ tuples consisting of pairs of restrictions of broken towers along the other branches and regular parts so that the join across each of the pairs and glue between them all has a full sight-set over the branch vertex.
\end{definition}
\begin{corollary}
As a direct corollary of \ref{height},
we have that $|\mcD_{v_b}| \leq 4$. From now on we let $V_1, V_2, V_3, V_4$ denote these blocks from greatest to least in poset order, in the case that less than $4$ are needed, define the extra blocks to be empty sets.
\end{corollary}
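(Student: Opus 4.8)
The plan is to specialize Theorem \ref{height} to the concrete branch structure of the type $E$ Dynkin diagram and simply read off the constant bound. First I would recall that $E_n$, viewed as a star graph in the sense of Definition \ref{D:stargraph}, has a single branching vertex $v_b$ (the trivalent node) together with exactly three branches: two short branches of lengths $1$ and $2$, and one long ``main'' branch of length $n-4$. The essential observation is that, whatever $n$ is, the two short branches have lengths that do not depend on $n$.

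Next I would invoke Theorem \ref{height}. Its proof measures the broken tower $\mcD_{v_b}$ against a distinguished branch (the ``branch of size $n$'') and bounds the number of blocks $|\mcD_{v_b}| = h$ by $\left(\sum_i a_i\right) + 1$, where the sum runs over the lengths $a_i$ of the \emph{remaining} branches, whose vertices are what supply the $h-1$ sight-symbols needed to fill out the sight-set over $v_b$. Since $\mcD_{v_b} = \{\mcB \in \mcD : v_b \in \mcB\}$ is an intrinsic object and so $|\mcD_{v_b}|$ does not depend on which branch we single out, I am free to choose the distinguished branch to be the long one of length $n-4$. With that choice the remaining branches are exactly the two short ones, so $\sum_i a_i = 2 + 1 = 3$, and Theorem \ref{height} yields $|\mcD_{v_b}| \le 3 + 1 = 4$.

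The one point that requires care -- and it is the whole reason the bound is a constant rather than growing with $n$ -- is the choice of which branch plays the role of the ``main'' branch in Theorem \ref{height}. Taking the longest branch minimizes the total length of the remaining branches, and it is precisely the fixed lengths $1$ and $2$ of the two short branches of $E_n$ that pin the sum $\sum_i a_i$ at $3$ for every $n$. Everything else is a direct substitution into the inequality already proved in Theorem \ref{height}, so I expect no further combinatorial work: the content of the corollary is entirely in recognizing the $\Gamma(n-4,2,1)$ structure and picking the optimal distinguished branch.
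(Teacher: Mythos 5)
Your proposal is correct and is essentially the paper's own (implicit) argument: the corollary is stated as a direct application of Theorem \ref{height}, exactly as you carry it out, recognizing $E_n$ as the star graph $\Gamma(n-4,2,1)$ and taking the long branch as the distinguished one so that the remaining branches contribute $\sum_i a_i = 2+1 = 3$, whence $|\mcD_{v_b}| \le 4$. Your added remark that the bound is uniform in $n$ precisely because the two short branches have fixed lengths is also the point of the phrase ``bounded as $n$ varies'' in Theorem \ref{height}, so there is no divergence from the paper's route.
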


\begin{lemma}
$$E(x) = x^3 \sum_{t \in \mathcal{T}} Cont(t)\sum_{BT_t}SD_{BT_t}(x)$$
Here $\mathcal{T}$ is the set of all possible gluing data, $BT_t$ is any broken tower diagram of gluing data $t$ and $SD_{BT_t}(x)$ is the generating function for joins of regular part and broken tower diagrams of gluing data $t$ (with exponent indicating length of the main branch).
\end{lemma}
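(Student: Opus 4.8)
The plan is to derive this generating-function identity directly from the decomposition bijection of Theorem \ref{bijection}, specialized to the star graph $E_n = \Gamma(n-4,2,1)$. First I would record the geometric fact that governs the whole argument: of the three branches of $E_n$, two have fixed lengths ($2$ and $1$) independent of $n$, while only the main branch grows. Consequently the branch vertex $v_b$ together with the main branch forms a path with $n-3$ vertices, and the two short branches contribute a fixed total of $3$ vertices. Applying Theorem \ref{bijection}, every fully supported connected staircase diagram $\mcD$ over $E_n$ is encoded uniquely by a triple (restriction to a broken tower, regular part, arrow) along each of the three branches, with all joins and the global glue valid. The requirement that $\mcD$ be an honest staircase diagram that is fully supported forces the broken tower $\mcD_{v_b}$ to have a full sight-set over $v_b$, and by the Corollary to Theorem \ref{height} we have $|\mcD_{v_b}| \le 4$, so only finitely many gluing data $t$ can occur.

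Next I would organize the count by the gluing data $t$ of the restriction to a broken tower along the main branch. Fixing $t$, a valid diagram is obtained exactly by choosing (i) a broken tower $BT_t$ along the main branch realizing $t$, together with a regular part joined to it, and (ii) completions along the two short branches---each a restriction to a broken tower plus a regular part---whose glue with the main-branch tower produces a full sight-set at $v_b$. The crucial point is that these two choices decouple. By Lemma \ref{sight-set fix}, whether a given completion on the short branches yields a full sight-set depends only on the sight-set $t$ of the main-branch tower, not on the specific tower $BT_t$; hence the number of admissible short-branch completions is exactly the constant $Cont(t)$. By Theorem \ref{adding lemma}, attaching a regular part to the main-branch tower leaves the sight-set at $v_b$ unchanged, so the presence of a regular part likewise does not interfere with the admissibility of a short-branch completion. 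Therefore, for fixed $t$, the admissible diagrams are enumerated by the product of $Cont(t)$ with the collection of main-branch towers-with-regular-part realizing $t$.

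It remains to track the variable $x$, which records the total number of vertices, so that the coefficient of $x^n$ counts diagrams over $E_n$. Every admissible short-branch completion occupies precisely the $3$ vertices of the two short branches, and full support guarantees all three are used; thus each of the $Cont(t)$ completions contributes the same factor $x^3$, which I pull outside the sum. The remaining vertices---those of $v_b$ and the main branch---are tracked by the generating function $SD_{BT_t}(x)$ for joins of $BT_t$ with a regular part, and summing over all towers $BT_t$ of gluing data $t$ gives $\sum_{BT_t} SD_{BT_t}(x)$. Combining the two factors and summing over the finitely many gluing data $t \in \mathcal{T}$ yields
$$E(x) = x^3 \sum_{t \in \mathcal{T}} Cont(t) \sum_{BT_t} SD_{BT_t}(x),$$
as claimed.

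Finally, a remark on where the real work lies. The formula itself is a bookkeeping consequence of the bijection, so the main obstacle is not the summation but justifying the clean factorization into a short-branch factor $Cont(t)$ and a main-branch factor $\sum_{BT_t} SD_{BT_t}(x)$. This rests entirely on two decoupling statements already in hand---that short-branch admissibility sees only the gluing data (Lemma \ref{sight-set fix}) and that regular parts are invisible to the sight-set at $v_b$ (Theorem \ref{adding lemma})---together with the finiteness $|\mcD_{v_b}| \le 4$ that makes $\mathcal{T}$ and each $Cont(t)$ finite. The one point demanding genuine care is the vertex accounting: one must check that the exponent convention in $SD_{BT_t}(x)$ and the prefactor $x^3$ together partition the $n$ vertices of $E_n$ with no double counting and no omission of $v_b$.
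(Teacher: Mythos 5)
Your proposal is correct and follows essentially the same route as the paper's own proof: decompose via Theorem \ref{bijection}, fix the gluing data $t$ of the main-branch tower, invoke Lemma \ref{sight-set fix} (with Theorem \ref{adding lemma}) to decouple the short-branch completions as the constant $Cont(t)$, and account for the three short-branch vertices with the factor $x^3$. Your write-up is simply a more detailed and careful version of the paper's terse argument, with the vertex bookkeeping made explicit.
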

\begin{proof}
We have that any staircase diagram on $E$ can be decomposed into a triple of pairs of regular part, broken tower along each branch. Fix some gluing data $t$ for the broken tower along the main branch, by lemma \ref{sight-set fix}
we have that the number of pairs of regular part, broken tower for the other $2$ branches is fixed, thus we only concern ourselves with pairs of regular part and broken tower diagrams. Given a generating function for the number of such pairs of length $n-3$ we simply introduce a $x^3$ term for the $3$ additional vertices past the branch. Summing $t$ over all gluing data gives the required.
\end{proof}

\begin{lemma}\label{special cat}
The number of dyck paths from $(k,1)$ to $(n,n)$ is given by $$\frac{\binom{2n-k}{n-k}k}{2n-k}.$$
This is a consequence of the ballot theorem, for a proof see p. 73 of \cite{feller1}.
\end{lemma}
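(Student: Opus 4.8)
The plan is to prove this by the reflection principle, which is precisely the lattice-path incarnation of the ballot theorem cited in the statement. First I would fix the convention: I interpret a Dyck path from $(k,1)$ to $(n,n)$ as a lattice path built from unit East steps $(1,0)$ and North steps $(0,1)$ that stays weakly below the main diagonal, i.e.\ satisfies $y \le x$ at every lattice point it visits. Such a path uses exactly $n-k$ East steps and $n-1$ North steps, so the total number of \emph{unconstrained} paths from $(k,1)$ to $(n,n)$ is $\binom{2n-k-1}{n-k}$.

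Next I would count the \emph{bad} paths, namely those that violate $y \le x$ somewhere, equivalently those that touch the line $y = x+1$. Reflecting the start point $(k,1)$ across $y = x+1$ sends it to $(0,k+1)$, and the reflection principle gives a bijection between bad paths from $(k,1)$ to $(n,n)$ and \emph{all} paths from $(0,k+1)$ to $(n,n)$. The latter use $n$ East and $n-k-1$ North steps, so there are $\binom{2n-k-1}{n}$ of them. Subtracting, the number of Dyck paths equals
$$\binom{2n-k-1}{n-k} - \binom{2n-k-1}{n}.$$

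The remaining step is a routine binomial identity. Writing $N = 2n-k$ and $r = n-k$, so that $N-2r = k$ and $\binom{2n-k-1}{n} = \binom{N-1}{r-1}$, and using $\binom{N-1}{r} = \frac{N-r}{N}\binom{N}{r}$ together with $\binom{N-1}{r-1} = \frac{r}{N}\binom{N}{r}$, one obtains
$$\binom{N-1}{r} - \binom{N-1}{r-1} = \frac{N-2r}{N}\binom{N}{r} = \frac{k}{2n-k}\binom{2n-k}{n-k},$$
which is the claimed formula.

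I do not expect a genuine obstacle here; the only real care is in pinning down the conventions, that is, which diagonal the paths lie under, whether the inequality is strict or weak, and hence which line to reflect across. A quick sanity check on small cases (for instance $n=2,k=1$ gives $1$ and $n=3,k=2$ gives $2$, both matching direct enumeration) confirms that the correct condition is the weak inequality $y \le x$, so that both endpoints may lie on the diagonal. Equivalently, tracking the quantity $x-y$ along the path exhibits it as a ballot sequence with $p = n$ and $q = n-k$, and the stated count is exactly the ballot number $\frac{p-q}{p+q}\binom{p+q}{p}$, matching the reference in \cite{feller1}.
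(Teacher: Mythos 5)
Your proof is correct, and it is in substance the same argument the paper appeals to: the paper gives no proof of its own, deferring entirely to the ballot theorem in Feller, and your reflection-principle argument is precisely the standard proof of that theorem, here worked out self-contained. Your conventions check out (weak inequality $y \le x$, reflection across $y = x+1$ sending $(k,1)$ to $(0,k+1)$), the count $\binom{2n-k-1}{n-k} - \binom{2n-k-1}{n}$ is right, and the algebraic reduction to $\frac{k}{2n-k}\binom{2n-k}{n-k}$ via $N = 2n-k$, $r = n-k$ is verified by the small cases you test, so the only thing you add beyond the paper is the detail it chose to cite away.
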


\begin{definition}
A broken staircase diagram is partially ordered set collection of subsets $\mathcal{B}$ (over $A_n$) so that $\mathcal{B} = \{\mcB \cap S_n |\mcB \in \mcD\}$ where $\mcD$ is a increasing or decreasing staircase diagram.
\end{definition}
\begin{lemma}\label{stair cat}
There is a bijection between Dyck paths from $(k,1)$ to $(n,n)$ and increasing connected staircase diagrams starting with a block of length $k$. An increasing staircase diagram has the property that $\mcB_1 \prec \mcB_2 \prec \cdots \mcB_n$ if $\mcB_i$ is the $i^{th}$ block along the branch. This is the same bijection as in \cite{RS15}.
\end{lemma}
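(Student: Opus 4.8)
The plan is to peel the statement into two layers: first turn an increasing connected staircase diagram into purely combinatorial ``interval data,'' and then match that data to the lattice paths counted in Lemma~\ref{special cat}.

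\emph{Step 1 (structural reduction).} Since the diagram is connected and lives over a path graph, Definition~\ref{D:staircase}(1) forces every block $\mcB_i$ to be an interval $[a_i,b_i]$ of consecutive vertices, and I may normalize the support to be $\{1,\dots,n\}$. Writing the totally ordered blocks as $\mcB_1 \prec \cdots \prec \mcB_p$, I would argue that the axioms pin down the two endpoint sequences. A short case analysis using the saturation axiom (3) rules out one interval containing the next and shows both $(a_i)$ and $(b_i)$ are weakly monotone; axiom (4) then upgrades this to strict monotonicity, since $\mcB_i$ being maximal over some vertex forces a vertex of $\mcB_i \setminus \mcB_{i+1}$ (hence $a_i < a_{i+1}$) and $\mcB_i$ being minimal over some vertex forces a vertex of $\mcB_i \setminus \mcB_{i-1}$ (hence $b_{i-1} < b_i$). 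Connectedness of $\mcB_i \cup \mcB_{i+1}$ gives the overlap bound $a_{i+1}\le b_i+1$, connectivity of the support gives $a_1=1$ and $b_p=n$, and the hypothesis on the first block is $b_1-a_1+1=k$, i.e.\ $b_1=k$. Conversely, any such data satisfies (1)--(4). Thus the objects in question are exactly pairs of strictly increasing sequences $1=a_1<\cdots<a_p$ and $k=b_1<\cdots<b_p=n$ with $a_i\le b_i$ and $a_{i+1}\le b_i+1$.

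\emph{Step 2 (the path).} This is precisely the data of a staircase (parallelogram) region, and I would encode it as a monotone north/east lattice path from $(k,1)$ to $(n,n)$, following the construction of \cite{RS15}: the strictly increasing right endpoints $b_i$ supply the $b_p-b_1=n-k$ east steps, the sweep over the $n$ vertices supplies the $n-1$ north steps, and the two are interleaved so that the rightmost point of the path at height $v$ records the right frontier $b_{A(v)}$ of the blocks already present at vertex $v$, where $A(v)$ counts the blocks with $a_i\le v$. The centerpiece of the argument is that the path stays weakly below the diagonal $y\le x$ \emph{iff} the diagram is connected: vertex $v$ is covered by the highest block started so far exactly when $v\le b_{A(v)}$, and this inequality is what keeps the path below the diagonal. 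The fixed endpoints then read off directly as $b_1=k$ and $b_p=n$.

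\emph{Step 3 (inverse and conclusion).} For the reverse map I would recover the $b_i$ from the columns of the north steps and the $a_i$ from the rows of the east steps, and check that the resulting intervals meet the four conditions of Step~1: strict monotonicity because the path is a genuine unit-step lattice path, $a_{i+1}\le b_i+1$ because it never crosses the diagonal, and the boundary values from the fixed endpoints. Verifying that the two assignments are mutually inverse is then routine, and Lemma~\ref{special cat} supplies the count $\binom{2n-k}{n-k}k/(2n-k)$.

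The main obstacle is Step~2: arranging the encoding so that the single graph-connectivity axiom translates cleanly into the single ``stays weakly below the diagonal'' condition, while at the same time ensuring axiom (4) (each block both maximal and minimal over some vertex) corresponds exactly to the path using honest unit steps with no flat redundancy. Once the diagonal/connectivity equivalence is nailed down, everything else is bookkeeping on the two monotone endpoint sequences.
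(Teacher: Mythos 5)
Your proposal is correct and takes essentially the same route as the paper: the paper's proof is exactly this bijection, with your endpoint sequences $(a_i,b_i)$ appearing as the partial sums of the run lengths $r_j$ of the Dyck path, and with the diagonal condition playing the same role (the paper justifies it in the diagram-to-path direction by counting minimal versus maximal elements rather than by your coverage argument, but the substance is identical). One indexing correction to your Step 2: as written, the east steps of block $i$ are taken at height $a_i$ (rightmost point at height $v$ equal to $b_{A(v)}$), which sends the valid diagram $\{1\}\prec\{2\}$ to the path through $(1,2)$ lying above the diagonal; the east steps for block $i$ must instead be taken at height $a_i-1$, i.e.\ the rightmost point at height $v$ should be $b_{A(v+1)}$, after which your diagonal-iff-connectivity equivalence does hold.
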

\begin{proof}
Let $r_i$ be a sequence of integers that form a dyck path with $i^{th}$ step being of length $r_i$ with the property that $\sum_{i} r_{2i} = n$ and $\sum_{i}r_{2i-1} = n$ (since this is a path to $(n,n)$) and $r_1 = k$. Define the blocks $\mcB_i$ over the vertices $s_1, s_2, \cdots$ to be $$\mcB_i = \{s_{\sum_{k=1}^{i-1}r_{2k} +1},\cdots , s_{\sum_{k=1}^{i}r_{2k-1} + 1}\}.$$The poset order on these blocks is $\mcB_1\prec \mcB_2 \cdots$. This is indeed a staircase diagram as one can easily see conditions $(1) - (3)$ from the staircase diagram definition are satisfied.
For the final condition we see that both ${\sum_{k=1}^{i-1}r_{2k} +1}$ and ${\sum_{k=1}^{i}r_{2k-1} + 1}$ are increasing with $i$ so all blocks have minimal and maximal elements. Also, note that the diagonal condition on Dyck paths makes sure that $\mcB_i$ is always defined, since this forces ${\sum_{k=1}^{i-1}r_{2k} +1} \leq {\sum_{k=1}^{i}r_{2k-1} + 1}$. This map from Dyck paths to staircase diagrams is injective (suppose that $2$ different Dyck paths differ at the $2i^{th}$ or $2i-1^{th}$ step, then the corresponding block $\mcB_i$ is different).

 Finally, every staircase diagram starting with a block of length $k$ can be represented as a Dyck path  (with $r_1 = k$) by simply taking the difference of successive vertices in $\mcB_i$ to get a sequence of $r_i$. The Dyck path constraint that $\sum_{i}r_{2i-1} \geq \sum_{i}r_{2i}$ is satisfied because in any increasing staircase diagram $\mcD$ we can consider the subdiagram of the first $i$ blocks. The subdiagram has $\sum_{i}r_{2i-1}$ minimal elements and at least $ \sum_{i}r_{2i}$ maximal elements (exactly  $\sum_{i}r_{2i}$ if we only consider maximal elements that are maximal within $\mcD$ as well). Since the number of maximal and minimal elements on a diagram must be equal ($1$ of each for every vertex) we have that $$\sum_{i}r_{2i-1} \geq \sum_{i}r_{2i}.$$
\end{proof}
\begin{lemma}\label{IK}
Let $I_k(x)$ be the generating function for increasing connected staircase diagrams starting with a block of length $k$. Let $IB_k(x)$ be the generating function for increasing broken staircase diagrams starting with a block of length $k$. Then:
\begin{enumerate}
\item $$I_k(x) = \sum_{n}\frac{\binom{2n-k}{n-k}k}{2n-k}x^n$$
\item $$IB_k(x) = \frac{(1-x)I_{k}(x)}{x} - x^{k-1} + x^k$$
\end{enumerate}
\end{lemma}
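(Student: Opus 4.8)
For part (1) there is essentially nothing to do beyond invoking the two preceding lemmas. By Lemma~\ref{stair cat} the increasing connected staircase diagrams that begin with a block of length $k$ and have main branch terminating at $n$ are in bijection with Dyck paths from $(k,1)$ to $(n,n)$, and by Lemma~\ref{special cat} there are exactly $\frac{\binom{2n-k}{n-k}k}{2n-k}$ of the latter. Since the exponent of $x$ in $I_k(x)$ records precisely the terminal coordinate $n$ of the path, summing over $n$ gives the stated closed form. The only thing to verify is that the grading on the two sides matches, which is immediate from the description of the blocks in Lemma~\ref{stair cat}.

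For part (2) the plan is to relate a broken staircase to the connected staircase that ``closes it off.'' First I would rewrite the target as $IB_k(x) = \tfrac{1-x}{x}\bigl(I_k(x)-x^k\bigr)$, equivalently $\tfrac{x}{1-x}IB_k(x) = I_k(x)-x^k$, and read the right-hand side as the generating function for connected increasing staircases consisting of \emph{more} than their single initial block. I then want a bijection: every such connected staircase arises from a broken staircase by appending a nonempty \emph{free tail}, namely the run of consecutive vertices lying in the top block alone; conversely, deleting this tail from a connected staircase (equivalently, restricting to the first $b_{m-1}$ vertices, where $b_{m-1}$ is the right endpoint of the second-highest block) returns a broken staircase. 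The factor $\tfrac{x}{1-x}=x+x^2+\cdots$ records a tail of length $t\ge 1$, while the $-x^k$ removes the degenerate single-block diagram, which has no tail to strip; this degenerate case is exactly what produces the boundary corrections $-x^{k-1}+x^k$ once the denominator is cleared.

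The remove-tail map is trivially well defined and, being a restriction of an increasing diagram, lands among broken staircases. The substance — and the step I expect to be the main obstacle — is showing that the append-tail map is a genuine two-sided inverse. This hinges on a structural fact: a broken staircase has a \emph{single} ``unclosed'' block, namely its top block (the one whose minimal vertex was lost under truncation), so that extending exactly that block by $t\ge 1$ fresh vertices restores axiom~(4) and yields an honest connected staircase, with no ambiguity in how the tail attaches. Proving this one-unclosed-block property is the real work: I would argue it from the increasing-then-decreasing length profile of Lemma~\ref{tri} together with the saturated-subchain axiom~(3), which should rule out two simultaneously truncated minimal elements. Once this is secured, the bijection gives $i_n=\sum_{m=k}^{n-1} ib_m$ for $n>k$ (writing $i_n,ib_m$ for the coefficients of $I_k,IB_k$), which is precisely the coefficient form of $\tfrac{x}{1-x}IB_k(x)=I_k(x)-x^k$; rearranging is routine and yields the stated identity.

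If the bijection proves delicate, I would instead transport everything to lattice paths, which fits the Dyck-path framework already in play: broken staircases of length $n$ should correspond to Dyck paths from $(k,1)$ to $(n+1,n+1)$ that avoid the lattice point $(n,n)$, the avoided corner encoding the unique unclosed block. Their number is $i_{n+1}-i_n$, since any such path through $(n,n)$ is forced to continue $(n,n)\to(n+1,n)\to(n+1,n+1)$ and these therefore number $i_n$; summing against $x^n$ then reproduces $\tfrac{1-x}{x}I_k(x)$, with the low-degree terms (near $n=k-1,k$) checked directly to supply the correction $-x^{k-1}+x^k$.
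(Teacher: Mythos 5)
Part (1) of your proposal is exactly the paper's argument (invoke Lemmas \ref{special cat} and \ref{stair cat} and match the grading), and it is fine.

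For part (2), your tail-stripping plan is essentially a telescoped form of the paper's proof: the paper deletes the last vertex one step at a time and classifies the restriction, obtaining the functional equation $x^k + xIB_k(x) + xI_k(x) = I_k(x) + x^{k+1}$, which rearranges to the stated identity; your version strips the whole free tail at once. The genuine gap is the structural claim you yourself flag as ``the real work'': that a broken staircase has \emph{exactly} one unclosed block. This is false under the definition in force. A broken staircase diagram is any restriction $\{\mcB \cap S_m : \mcB \in \mcD\}$ of an increasing staircase diagram, and such a restriction may itself satisfy axiom (4), hence have \emph{no} unclosed block; the correct statement is ``at most one.'' Moreover, the identity you are proving forces $IB_k$ to count these honest restrictions: for $k=1$ the formula gives $IB_1(x) = x + x^2 + 3x^3 + \cdots$, and the coefficient of $x^1$ counts the one-vertex diagram $\{s_1\}$ (a restriction of $\{s_1\}\prec\{s_2\}$), which is an honest staircase diagram; if only strictly broken diagrams were counted, that coefficient would be $0$ and the identity would be false. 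Your appending map therefore breaks down precisely on honest broken diagrams. Concretely, $\{s_1\}\prec\{s_2\}\prec\{s_3\}$ strips to the pair $(\{s_1\}\prec\{s_2\},\, t=1)$, but $\{s_1\}\prec\{s_2\}$ has no unclosed block to extend; and if you extend its top block instead, you get $\{s_1\}\prec\{s_2,s_3\}$, whose free tail is $\{s_2,s_3\}$, so stripping returns $(\{s_1\},\, t=2)$ rather than $(\{s_1\}\prec\{s_2\},\, t=1)$. Thus, as described, the appending map is neither total nor inverse to stripping, and the coefficient identity $i_n=\sum_{m<n} ib_m$ does not follow. The repair is a two-case inverse: if the broken diagram is strictly broken, extend its unique unclosed top block by the $t$ fresh vertices; if it is honest, append a brand-new block consisting of those $t$ vertices. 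With that case added, your bijection works and becomes equivalent to the paper's recursion, in which the term $xIB_k(x)+xI_k(x)$ counts strictly broken restrictions once and honest restrictions twice --- exactly their respective numbers of honest one-vertex extensions. The same two-case fact is what is needed to justify the lattice-path fallback count $ib_n = i_{n+1}-i_n$ for $n>k$, so that route does not let you avoid the issue.
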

\begin{proof}
The generating function for $I_k(x)$ come directly from \ref{special cat} and \ref{stair cat}. To find the generating function for $IB_k(x)$ note that the restriction to $\{s_1, \cdots, s_{n-1}\}$ of any increasing staircase diagram starting with a block of length $k$  on $\{s_1, \cdots,s_{n}\}$ is either an increasing broken staircase diagram or another increasing staircase diagram starting with a block of length $k$ except for the special case where the staircase diagram is just a block of length $k$. This information gives the following:
$$x^k + xIB_{k}(x)+xI_{k}(x) = I_{k}(x) + x^{k+1}$$
Yielding the required.
\end{proof}
\begin{lemma}
The generating function for $B_k(x)$, that is, staircase diagrams starting with a first block of length $k$ and second block greater than the first in poset order ($\mcB_1 \prec \mcB_2$) on the graph $A_n$ is given by
$$B_k(x) = \frac{I(x)IB_k(x)}{1-Br(x)^2}.$$
Here $I(x)$ and $Br(x)$ are the generating functions for increasing staircase diagrams and increasing broken staircase diagrams as found in \cite{RS15}.
$$I(x) = \frac{1 - 2x \sqrt{1-4x}}{2x}$$
$$Br(x) = \frac{(1-x)I(x)}{x} - 1 $$
\end{lemma}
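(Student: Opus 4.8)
The plan is to read each diagram of $B_k$ from left to right along the path $A_n$ and decompose it into maximal monotone runs. On a connected staircase over a path the induced poset is a single chain, and adjacent blocks are comparable by axiom $(3)$, so the blocks $\mcB_1,\mcB_2,\dots$ taken in \emph{path} order carry a well-defined sequence of relations $\prec$ or $\succ$ between consecutive terms. A maximal stretch of $\prec$ is an \emph{increasing run} and a maximal stretch of $\succ$ a \emph{decreasing run}; these alternate, and by hypothesis the first run is increasing with leftmost block of length $k$. The two inputs are the bijection of Lemma \ref{stair cat} (increasing runs are enumerated by $I_k$ and $I$) together with the mirror symmetry $s_i\mapsto s_{n+1-i}$ of the path, which is a graph automorphism carrying increasing to decreasing staircases bijectively and so makes decreasing runs counted by the \emph{same} series; and the broken generating functions of Lemma \ref{IK}, which enumerate a run with its shared extremal block truncated.

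First I would make the gluing precise. Where an increasing run meets the following decreasing run there is a shared peak block, and where a decreasing run meets the following increasing run there is a shared valley block; each such block must be counted exactly once. I would adopt the convention that every run surrenders its right-hand extremal block to its right neighbour, so that each truncated run is exactly a broken staircase diagram in the sense introduced above. Then the leftmost increasing run, truncated at its peak, contributes $IB_k(x)$; each interior $(\text{down},\text{up})$ pair forms a valley gadget contributing $Br(x)^2$, one factor of $Br$ for the truncated decreasing run (surrendering its valley) and one for the truncated increasing run (surrendering its peak); and the final run, whose right end is the free right end of the path and is therefore not truncated, contributes $I(x)$, counted by $I$ by mirror symmetry even when it is decreasing. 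Under this convention each interior extremum is surrendered exactly once, so summing over the number $m\ge 0$ of interior valley gadgets gives
\[
B_k(x) = IB_k(x)\Bigl(\sum_{m\ge 0} Br(x)^{2m}\Bigr) I(x) = \frac{I(x)\,IB_k(x)}{1-Br(x)^2}.
\]

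The subtle point, and the step I expect to be the main obstacle, is to verify that this really is a bijection, i.e. that every diagram is produced exactly once and with the correct vertex weight. Two issues must be handled. First, a diagram whose rightmost run is increasing and ends at a peak (in particular a purely increasing diagram, which lies in $B_k$ since $\mcB_1\prec\mcB_2$) has an odd number of runs and no terminal descent; this is absorbed by allowing the final factor $I(x)$ to be a single block, which merely restores the peak truncated from the preceding run. Because $I(x)$ has no constant term (its minimal diagram is a single vertex, contributing $x$) this restoration is unambiguous and supplies exactly the missing extremal block, so the parity of the run count is determined and no diagram is counted twice; interior $Br$-factors correspond to runs lying strictly between two distinct consecutive extrema and are therefore genuine, which pins down $m$ and the (non)triviality of the terminal factor uniquely. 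Second, since consecutive blocks overlap along the path, one must check that the vertex exponents add correctly across a shared block: this is precisely what the shift factors $\tfrac{1-x}{x}$ in the formulas of Lemma \ref{IK} for $IB_k$ and $Br$ encode, and confirming that these broken series count the truncated runs under the surrender convention with shared blocks weighted once is the heart of the argument. Once this is in place, substituting the closed forms for $I(x)$ and $Br(x)$ recorded above from \cite{RS15} yields the stated expression.
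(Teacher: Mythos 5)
Your proposal is correct and is essentially the paper's own proof: the paper likewise cuts the diagram at critical points into alternating monotone pieces, namely an initial increasing part with first block of length $k$, an even number $2n-2$ of broken staircase pieces, and a final (possibly trivial) piece, assigns them the generating function $I(x)\,IB_k(x)\,Br(x)^{2n-2}$, and sums the geometric series to get $\frac{I(x)\,IB_k(x)}{1-Br(x)^2}$. Your handling of the run-parity and of the shared extremal blocks (the ``surrender'' convention) is in fact more explicit than the paper's, which states the decomposition without verifying uniqueness or the weight bookkeeping.
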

\begin{proof}
Let $\mcD$ be a broken staircase diagram of type $\mcB_k$. We define a critical point of $\mcD$ as an node $s \in \Gamma$ in the underlying graph so that $s$ is both maximal and minimal in the chain, or equivalently, that $|\mcD_s| = 1$.\\

\begin{example}
   \begin{tikzpicture}[scale=0.5]
\planepartitionD{10}{{0,0,0,0,0,1,1,0,0,0},{1,1,0,1,1,0,1,1,1,0},{0,1,1,1,0,0,0,1,1,1}}
   \end{tikzpicture}

Has critical points at $s_1, s_5, s_8$ and $s_{10}$.
\end{example}

Suppose we break diagram $\mcD$ into parts $\mcD_1, \mcD_2, \cdots \mcD_{2n}$ so that $\bigcup_{i = 1}^{2n}\mcD_i = \mcD$ where $\mcD_1$ is an increasing diagram starting with a first block of length $k$, $\mcD_{2n}$ is a decreasing broken staircase diagram (including the empty staircase diagram) and $\mcD_i$ for $2 \leq i \leq 2n-1$ is a broken staircase diagram.We may represent the generating function for such diagrams as $I(x)IB_{k}(x)Br(x)^{2n-2}$. Summing over $n$ gives:
$$\sum_{n = 1}^{\infty} I(x)IB_{k}(x)Br(x)^{2n-2} = \frac{I(x)IB_k(x)}{1-Br(x)^2}.$$
\end{proof}

\begin{definition}
We use $SD_{\mcD}(x)$ to denote the generating function (with respect to length of the branch) enumerating the number of joins of staircase diagrams with restriction to a broken tower, $\mcD$.
\end{definition}
\begin{lemma}\label{SD}
Suppose $\mcD_1$ is an broken tower diagram with maximum block length $|\mcB_l| = k$ and suppose that $|\mcB_{<l}| = x_1$ and $|\mcB_{>l}| = x_2$. Here $\mcB_{<l}$ denote the block $\mcB_{i} \succ \mcB_{l}$ and $|\mcB_i| < k$. Similarly, $\mcB_{>l}$ denote the block $\mcB_{i} \prec \mcB_{l}$ and $|\mcB_i| < k$. Then $$SD_{\mcD_1}(x) = x^k + x^{x_1} B_{k-x_1}(x) + x^{x_2}B_{k-x_2}.$$
\end{lemma}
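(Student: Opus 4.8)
The plan is to enumerate all valid joins of the broken tower $\mcD_1$ with a regular part and to organize them according to the two pieces of data specified by Definition \ref{D:J}: the choice of arrow $\pm$ and the regular part $\mcD_2$, whose first block is forced to equal $\mcB_l \setminus \mcB_{l\pm1}$. Since $SD_{\mcD_1}(x)$ records total branch length, I would first fix coordinates along the branch so that the branch vertex is $s_1$ and the maximum block is $\mcB_l = \{s_1, \ldots, s_k\}$. By Lemma \ref{tri} the broken tower is increasing-then-decreasing in length, so $\mcB_l$ is a genuine peak and its two poset-neighbors are $\mcB_{<l} = \{s_1, \ldots, s_{x_1}\}$ and $\mcB_{>l} = \{s_1, \ldots, s_{x_2}\}$, with $x_1, x_2 < k$ by hypothesis.

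The degenerate join, in which $\mcD_2$ does not extend the branch past $s_k$ (in particular the empty regular part), simply reconstitutes the broken tower, whose support is $\{s_1, \ldots, s_k\}$; this contributes the single monomial $x^k$. For a branch-extending regular part I would split on the arrow. Each arrow pairs $\mcB_l$ with one of its two neighbors as in Definition \ref{D:Reg}: removing $\mcB_{<l}$ forces the first block of $\mcD_2$ to be $\mcB_l \setminus \mcB_{<l} = \{s_{x_1+1}, \ldots, s_k\}$ of length $k - x_1$, while removing $\mcB_{>l}$ forces $\{s_{x_2+1}, \ldots, s_k\}$ of length $k - x_2$. The inherited order between $\mcB_l$ and $\mcB_2$ becomes, inside $\mcD_2$, a relation $\mcB_1 \prec \mcB_2$ (or its reflection $\mcB_1 \succ \mcB_2$), and the two remaining join conditions $S(\mcD_1)\cap S(\mcD_2)\setminus\mcB_1 = \emptyset$ and $\mcB_l\setminus\mcB_{l\pm1}\setminus S(\mcD_2\setminus\mcB_2)\ne\emptyset$ force $\mcD_2$ to be an honest type-$A$ staircase meeting the broken tower only along its first block.

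Consequently the valid regular parts for the two arrows are counted, up to the length-preserving reflection reversing the poset, by $B_{k-x_1}(x)$ and $B_{k-x_2}(x)$. Because $B_{k-x_i}(x)$ measures support length starting from its first block at $s_{x_i+1}$, I would multiply by $x^{x_i}$ to restore the full branch length measured from $s_1$, giving the terms $x^{x_1}B_{k-x_1}(x)$ and $x^{x_2}B_{k-x_2}(x)$. Summing the three disjoint contributions yields the claimed identity $SD_{\mcD_1}(x) = x^k + x^{x_1}B_{k-x_1}(x) + x^{x_2}B_{k-x_2}(x)$.

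The step I expect to be the main obstacle is verifying that the join conditions of Definition \ref{D:J} carve out exactly the $B$-type diagrams and nothing more: concretely, that $\mcB_l\setminus\mcB_{l\pm1}\setminus S(\mcD_2\setminus\mcB_2)\ne\emptyset$ is equivalent to ``$\mcB_l$ survives as the genuine maximum of the joined diagram'', so that no two joins collapse to the same staircase diagram and the three cases stay disjoint. The up-down reflection used for the $x_2$ term is routine but must be invoked with care, since $B_k(x)$ is defined with the fixed orientation $\mcB_1 \prec \mcB_2$.
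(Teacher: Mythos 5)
Your proposal follows essentially the same route as the paper's proof: split on the arrow (i.e.\ whether $\mcB_l \prec \mcB_2$ or $\mcB_l \succ \mcB_2$), note that the join condition forces the first block of the regular part to be $\mcB_l \setminus \mcB_{l\pm 1}$ of length $k-x_i$ so that the valid regular parts in each case are counted by $B_{k-x_i}(x)$ shifted by $x^{x_i}$, and add $x^k$ for the trivial (no-join) case. The only difference is that you explicitly flag the verification that the join conditions carve out exactly the $B$-type diagrams, a point the paper asserts without further argument, so your write-up is if anything slightly more careful on the same path.
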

\begin{proof}
Suppose that we have some join of a regular part and a restriction to a broken tower. As before we let $\mcB_1$ and $\mcB_2$ be the first $2$ blocks of the regular part. We have two cases: $\mcB_l \prec \mcB_2$ or $\mcB_l \succ \mcB_2$, each case corresponds to a term (either $x^{x_1} B_{k-x_1}(x)$ or $x^{x_2} B_{k-x_2}(x)$).
W.L.O.G. assume  $\mcB_1 \prec \mcB_2$. Assume we have a join of a regular part and a restriction to a broken tower, $\mcD$, over a path of $n$ vertices. From the definition of join we have that $\mcB_1 = \mcB_l \setminus \mcB_{l - 1}$ giving $|\mcB_1| = k - x_1$. Apart from this there are no more conditions on the regular part to be a valid join to $\mcD$. Since the first $x_1$ vertices of the path  on $n$ are taken up solely by the restriction to a broken tower the number of valid regular parts is the $x^{n-x_1}$ coefficient of $B_{k-x_1}(x)$. Summing over $n$ gives the desired generating function (and mutatis mutandis for $\mcB_l \succ \mcB_2$). We also add an $x^k$ term for no join:
$$SD_{D, \uparrow}(x) = \sum_{n}[x^{n-x_1}]B_{k-x_1}(x) = \sum_{n}[x^n]x^{x_1}B_{k-x_1}(x) = x^{x_1}B_{k-x_1}(x)$$
$$SD_{D, \downarrow}(x) = \sum_{n}[x^{n-x_2}]B_{k-x_2}(x) = \sum_{n}[x^n]x^{x_2}B_{k-x_2}(x) = x^{x_2}B_{k-x_2}(x)$$
$$SD_{D}(x) = SD_{D, \uparrow}(x) + SD_{D, \downarrow}(x) + x^k = x^k + x^{x_1} B_{k-x_1}(x) + x^{x_2}B_{k-x_2}$$
\end{proof}
\begin{lemma}\label{BT}
$$\sum_{BT_t} SD_{BT_t}(x) = \sum_{k}\sum_{(V_1, V_2, V_3, V_4) \in BT_{t,k}}x^{x_1}B_{k-x_1}(x) + x^{x_2}B_{k-x_2}.$$
The quadruple $(V_1, V_2, V_3, V_4)$ is in $BT_{t,k}$ if the broken tower diagram consisting of $V_1 \succ V_2 \succ V_3 \succ V_4$ has $|B_l| = k$ and has gluing data $t$.
\end{lemma}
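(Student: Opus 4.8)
The plan is to convert the left-hand sum into a term-by-term application of Lemma~\ref{SD}, after reindexing the family of broken towers of a fixed gluing data $t$ by concrete combinatorial data. The whole identity should then fall out of a finite reorganization of closed-form generating functions, with a single genuinely delicate point.

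First I would use the bound $|\mcD_{v_b}|\le 4$ from the corollary following Theorem~\ref{height}: a broken tower over the branch vertex has at most four blocks, so it is recorded faithfully by the quadruple $(V_1,V_2,V_3,V_4)$ of its blocks in decreasing poset order, padded with $\emptyset$ when fewer than four occur. This replaces the abstract index $BT_t$ in the left-hand sum by a sum over admissible quadruples. By Lemma~\ref{tri} the block lengths of any broken tower increase and then decrease, so there is a well-defined maximal block length $k=|\mcB_l|$; stratifying the quadruples by this value of $k$ is exactly the partition into the sets $BT_{t,k}$ that appears on the right. Restricting to quadruples whose induced tower $V_1\succ V_2\succ V_3\succ V_4$ carries gluing data $t$ is harmless, since the gluing data is read off from the sight-set and hence is a function of the quadruple.

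Next, for each quadruple I would substitute the closed form from Lemma~\ref{SD}. Reading $x_1=|\mcB_{<l}|$ and $x_2=|\mcB_{>l}|$ off the quadruple, that lemma gives $SD_{BT_t}(x)=x^k+x^{x_1}B_{k-x_1}(x)+x^{x_2}B_{k-x_2}(x)$, where the two $B$-terms are the generating functions for the joins with $\mcB_l\prec\mcB_2$ and with $\mcB_l\succ\mcB_2$ respectively. Summing over all quadruples in $BT_{t,k}$ and then over $k$ assembles these two families into precisely the double sum on the right.

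The step that needs genuine care — and the one I expect to be the main obstacle — is the bookkeeping of the standalone term $x^k$, the ``no join'' contribution, which Lemma~\ref{SD} produces once for every broken tower but which is not listed separately on the right-hand side. I would resolve this by tracking exactly where the single-maximal-block configuration is counted. By the decomposition of Theorem~\ref{bijection} each branch diagram is recovered exactly once, so the collected $x^k$ terms of gluing data $t$ cannot constitute a new family; they must coincide with lowest-degree contributions already produced by the two $B$-families. I would make this precise by comparing the minimal-degree monomials of $x^{x_1}B_{k-x_1}(x)$ and $x^{x_2}B_{k-x_2}(x)$ against $x^k$ for each quadruple, thereby confirming that the identity holds as a statement of formal power series once the standalone terms are folded in rather than itemized. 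Everything outside this reconciliation is a mechanical rewriting of a finite collection of generating functions.
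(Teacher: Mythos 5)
Your overall reduction is exactly the paper's: use the bound $|\mcD_{v_b}|\leq 4$ to record each broken tower as a quadruple $(V_1,V_2,V_3,V_4)$, partition $BT_t=\bigcup_{k\geq 1}BT_{t,k}$ by the maximal block length, and apply Lemma~\ref{SD} termwise. The paper's proof is literally this one-line observation, so up to that point there is nothing to object to.

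The gap is in your final paragraph, where you claim the standalone no-join terms $x^k$ produced by Lemma~\ref{SD} are already absorbed as ``lowest-degree contributions'' of the families $x^{x_1}B_{k-x_1}(x)$ and $x^{x_2}B_{k-x_2}(x)$. They are not, and this is checkable from the paper's own formulas: since $I(x)=x+2x^2+\cdots$ and $IB_j(x)=jx^{j}+\cdots$, the formula $B_j(x)=\frac{I(x)\,IB_j(x)}{1-Br(x)^2}$ gives $B_j(x)=jx^{j+1}+\cdots$, i.e.\ $B_j$ counts only diagrams with at least two blocks and starts in degree $j+1$. Hence $x^{x_i}B_{k-x_i}(x)$ starts in degree $x_i+(k-x_i+1)=k+1$, and the degree-$k$ no-join configuration occurs nowhere in the displayed right-hand side. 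Moreover, Theorem~\ref{bijection} gives the opposite of what you infer from it: because each branch diagram arises from exactly one (broken tower, regular part, arrow) triple, the no-join diagram is a genuinely distinct configuration that must be counted in addition to the two join families, not a duplicate to be folded into them. The correct reconciliation is that the lemma as displayed is missing a $+\,x^k$ inside the inner sum, i.e.\ each summand should be the full expression $x^k+x^{x_1}B_{k-x_1}(x)+x^{x_2}B_{k-x_2}(x)$ of Lemma~\ref{SD}; this is corroborated by the table in Lemma~\ref{cases}, where every listed generating function carries the corresponding $x^k$ contribution. So instead of an absorption argument (which fails as a statement about formal power series), you should state the corrected identity, after which the proof really is immediate from Lemma~\ref{SD} together with $BT_t=\bigcup_{k}BT_{t,k}$.
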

\begin{proof}
This comes directly from \ref{SD} and the fact that $BT_t = \bigcup_{k = 1}^{\infty}BT_{t,k}$.
\end{proof}
\begin{lemma}\label{cases}
What follows is a table of values detailing the set of possible gluing data in the $E_n$ case as well parametrizations for the quadruples of $V_i$ in each $BT_{t,k}$ and the number of pairs of pairs of restriction to broken tower and regular part so that the glue of all the broken towers has a full sight-set. This is easily brute forceable in the $E_n$ case as the number of nodes outside the main branch is fixed. Given a gluing data one can test all the possible continuations.

\begin{table}[]

\begin{tabular}{|l|l|l|l|l|}
\hline
Broken Tower & Sight-Set                                                         & Parametrization & Continuations & Symmetry ($\times$2)                                                       \\ \hline
            \begin{tikzpicture}[scale=0.5]
            \planepartition{}{ {1},{1,1},{1,1,1}, {1,1,1,1}}.
        \end{tikzpicture}	& \begin{tabular}[c]{@{}l@{}}$u/d$\\ $d$\\ $d$\\ $d$\end{tabular}& $k > x_2 > |V_3| > |V_4| > 0$& 8             & Yes                                                                   \\ \hline
            \begin{tikzpicture}[scale=0.5]
            \planepartition{}{ {1},{1,1},{1,1}, {1,1,1}}.
        \end{tikzpicture}	& \begin{tabular}[c]{@{}l@{}}$u/d$\\ $\emptyset$\\ $d$\\ $d$\end{tabular} & $k > x_2 > |V_4| > 0$   &1         & Yes                                                                   \\ \hline
            \begin{tikzpicture}[scale=0.5]
            \planepartition{}{ {1},{1},{1,1}, {1,1,1}}.
        \end{tikzpicture}	& \begin{tabular}[c]{@{}l@{}}$u/d$\\ $d$\\ $\emptyset$\\ $d$\end{tabular} & $k> x_2 > |V_3| > 0$    &4          & Yes                                                                   \\ \hline
            \begin{tikzpicture}[scale=0.5]
            \planepartition{}{ {1},{1,1},{1,1,1,1}, {1,1,1}}.
        \end{tikzpicture}	& \begin{tabular}[c]{@{}l@{}}$u$\\ $u/d$\\ $d$\\ $d$\end{tabular}           & $k > x_1 , k > x_2 > |V_4|> 0$ &1              & Yes                                                           \\ \hline
            \begin{tikzpicture}[scale=0.5]
            \planepartition{}{ {1},{1,1}, {1,1,1}}.
        \end{tikzpicture}	& \begin{tabular}[c]{@{}l@{}}$u/d$\\ $d$\\ $d$\end{tabular}               &     $k > x_1 > |V_3| > 0$             &29               & Yes                                                                   \\ \hline
            \begin{tikzpicture}[scale=0.5]
            \planepartition{}{ {1},{1}, {1,1}}.
        \end{tikzpicture}& \begin{tabular}[c]{@{}l@{}}$u/d$\\ $\emptyset$\\ $d$\end{tabular}     &   $k > x_1 > 0$                         &13               & Yes                                                                   \\ \hline
            \begin{tikzpicture}[scale=0.5]
            \planepartition{}{ {1},{1,1}, {1,1}}.
\end{tikzpicture}	& \begin{tabular}[c]{@{}l@{}}$u$\\ $d$\\ $d$\end{tabular}                 &     $k > x_1 > 0$                         &1            & Yes                                                                   \\ \hline
\end{tabular}
\end{table}

\begin{table}[]

\begin{tabular}{|l|l|l|l|l|}
\hline
Broken Tower & Sight-Set                                                         & Parametrization & Continuations & Symmetry (x2)                                                       \\ \hline
            \begin{tikzpicture}[scale=0.5]
            \planepartition{}{ {1},{1,1}, {1}}.
\end{tikzpicture}	& \begin{tabular}[c]{@{}l@{}}$u$\\ $u/d$\\ $d$\end{tabular}               &  $k > x_1 > 0, k > x_2 > 0$         &12             & \begin{tabular}[c]{@{}l@{}}Not if $x_1 = x_2$\end{tabular} \\ \hline
            \begin{tikzpicture}[scale=0.5]
            \planepartition{}{ {1},{1,1}}.
\end{tikzpicture}	& \begin{tabular}[c]{@{}l@{}}$u/d$\\ $d$\end{tabular}                   &     $k > x_2 > 0$                            &41              & Yes                                                                   \\ \hline
            \begin{tikzpicture}[scale=0.5]
            \planepartition{}{ {1},{1}}.
\end{tikzpicture}		& \begin{tabular}[c]{@{}l@{}}$u$\\ $d$\end{tabular}                     &   $k$                                             &33            & No                                                                    \\ \hline
            \begin{tikzpicture}[scale=0.5]
            \planepartition{}{{1}}.
\end{tikzpicture}	& $u/d $                                                                                    & $k$                                              &10                              & No                                                                    \\ \hline
\end{tabular}
\end{table}
\newpage
\begin{table}[]

\begin{tabular}{|l|l|}
\hline
Broken Tower & Generating Function  \\ \hline
\begin{tikzpicture}[scale=0.5]
            \planepartition{}{ {1},{1,1},{1,1,1}, {1,1,1,1}}.
        \end{tikzpicture}					& $\sum _{k=1}^{\infty } \left(\sum _{x_1=1}^{k-1} \frac{(x_1-2) (x_1-1)}{2}  x^{x_1} B_{k-x_1}(x)\right)+\frac{(k-3) (k-2) (k-1)}{6}B_{k}(x)+x^k$  \\ \hline
\begin{tikzpicture}[scale=0.5]
            \planepartition{}{ {1},{1,1},{1,1}, {1,1,1}}.
    \end{tikzpicture}					& $\sum _{k=1}^{\infty } \left(\sum _{x_1=1}^{k-1} (x_1-1) x^{x_1} B_{k-x_1}(x)\right)+\frac{1}{2} (k-2) (k-1)B_{k}(x)+x^k$   \\ \hline
\begin{tikzpicture}[scale=0.5]
            \planepartition{}{ {1},{1},{1,1}, {1,1,1}}.
        \end{tikzpicture}					& $\sum _{k=1}^{\infty } \left(\sum _{x_1=1}^{k-1} (x_1-1) x^{x_1} B_{k-x_1}(x)\right)+\frac{1}{2} (k-2) (k-1)B_k(x)+x^k$ \\ \hline
\begin{tikzpicture}[scale=0.5]
            \planepartition{}{ {1},{1,1},{1,1,1,1}, {1,1,1}}.
        \end{tikzpicture}					& $\sum _{k=1}^{\infty } \left(\sum _{x_1=1}^{k-1} \frac{(k-2) (k-1) }{2} x^{x_1} B_{k-x_1}(x)+(k-1) (x_1-1) x^{x_1} B_{k-x_1}(x)\right)+\frac{(k-2) (k-1)^2}{2}  x^k$  \\ \hline
\begin{tikzpicture}[scale=0.5]
            \planepartition{}{ {1},{1,1}, {1,1,1}}.
    \end{tikzpicture}					&  $\sum _{k=1}^{\infty } \left(\sum _{x_1=1}^{k-1} (x_1-1) x^{x_1} B_{k-x_1}(x)\right)+\frac{1}{2} (k-2) (k-1)B_{k}(x)+x^k$  \\ \hline
\begin{tikzpicture}[scale=0.5]
            \planepartition{}{ {1},{1}, {1,1}}.
        \end{tikzpicture}					& $\sum _{k=1}^{\infty } \left(\sum _{x_1=1}^{k-1} x^{x_1} B_{k-x_1}(x)\right)+(k-1) B_{k}(x)+(k-1) x^k$   \\ \hline
\begin{tikzpicture}[scale=0.5]
            \planepartition{}{ {1},{1,1}, {1,1}}.
\end{tikzpicture}							& $\sum _{k=1}^{\infty } \left(\sum _{x_1=1}^{k-1} x^{x_1} B_{k-x_1}(x)\right)+(k-1) B_{k}(x)+(k-1) x^k$ \\ \hline
\begin{tikzpicture}[scale=0.5]
            \planepartition{}{ {1},{1,1}, {1}}.
\end{tikzpicture}							& $\sum _{k=1}^{\infty } 2(k-1) \left(\sum _{x_1=1}^{k-1} x^{x_1} B_{k-x_1}(x)\right)+(k-1)^2 x^k$     \\ \hline
\begin{tikzpicture}[scale=0.5]
            \planepartition{}{ {1},{1,1}}.
\end{tikzpicture}							& $\sum _{k=1}^{\infty } \left(\sum _{x_1=1}^{k-1} x^{x_1} B_{k-x_1}(x)\right)+(k-1) B_{k}(x)+(k-1) x^k$  \\ \hline
\begin{tikzpicture}[scale=0.5]
            \planepartition{}{ {1},{1}}.
\end{tikzpicture}							& $\sum _{k=1}^{\infty } 2B_k(x)+x^k$  \\ \hline
\begin{tikzpicture}[scale=0.5]
            \planepartition{}{{1}}.
\end{tikzpicture}							& $\sum _{k=1}^{\infty } 2B_k(x)+x^k$   \\ \hline

\end{tabular}
\end{table}
\end{lemma}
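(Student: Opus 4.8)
The two tables assemble a finite amount of data: for each admissible gluing datum $t$ they record its sight-set, a parametrization of the quadruples $(V_1,V_2,V_3,V_4)$ cutting out $BT_{t,k}$, the integer $Cont(t)$ (together with a symmetry factor), and the series $\sum_{BT_t}SD_{BT_t}(x)$. The plan is to verify the columns in sequence, each being a finite check made possible by the corollary to Theorem \ref{height}, which caps the broken tower over $v_b$ at four blocks. \emph{Completeness of the gluing data:} since $|\mcD_{v_b}|\le 4$ and, by Lemma \ref{tri}, the block sizes along a broken tower form a unimodal (increasing-then-decreasing) sequence, Corollary \ref{l:cor} restricts each sight-set to one of the strings $u\{u,\emptyset\}^\ast\{u/d\}\{d,\emptyset\}^\ast d$ and its degenerate variants; enumerating those of length at most four and discarding any whose unimodal profile cannot be realized over $v_b$ together with the two short branches of $E_n$ (of lengths $2$ and $1$) leaves exactly the eleven rows displayed.

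\emph{Parametrizations.} For each row the inequality chain in the third column is read off from the sight-set via Lemma \ref{tri}: the sizes rise to the maximum $|\mcB_l|=k$ and then fall, a symbol $u$ or $d$ at a block forcing a strict step there and an $\emptyset$ forcing equality of the relevant consecutive sizes. This produces chains such as $k>x_2>|V_3|>|V_4|>0$ mechanically, and I would simply carry it out line by line.

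\emph{Continuation counts.} This is where the real work lies. Fixing the gluing datum along the main branch, Lemma \ref{sight-set fix} shows that the number of completing pairs on the remaining branches depends only on the sight-set, so $Cont(t)$ is well-defined; moreover, because $E_n$ has only the two short branches, the pool of broken-tower/regular-part pairs available there is finite and independent of $n$. I would enumerate this pool and count those whose glue with the fixed datum promotes every remaining $\{u,d,\emptyset\}$ slot over $v_b$ to $u/d$, i.e.\ to a full sight-set, hence to a genuine staircase diagram by the bijection of Theorem \ref{bijection}. The ``Symmetry ($\times 2$)'' column records whether reversing the up/down arrow on the main branch yields a genuinely distinct datum, doubling the count, or a self-paired one; the entry ``Not if $x_1=x_2$'' flags the arrow-symmetric exception. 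These counts are not uniform across rows and must be obtained by exhaustive case analysis, which is the brute force the statement advertises as feasible precisely because the off-branch structure is bounded.

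\emph{Generating functions.} The last table follows by summing Lemma \ref{SD} over the quadruples allowed by each parametrization, organized as in Lemma \ref{BT}. Each entry splits into summands mirroring $SD_{\mcD_1}(x)=x^k+x^{x_1}B_{k-x_1}(x)+x^{x_2}B_{k-x_2}(x)$: a term in $B_{k-x_1}(x)$ or $B_{k-x_2}(x)$ for the arrow attaching the regular part on a side of $\mcB_l$ carrying a smaller block, a bare term in $B_k(x)$ that appears exactly when $\mcB_l$ sits at an end of the tower (so that offset vanishes), and the $x^k$ no-join term. For fixed $k$ and fixed active size, the number of ways to fill in the remaining strictly decreasing block sizes is a binomial in that size — for instance $\binom{x_1-1}{2}=\tfrac{(x_1-1)(x_1-2)}{2}$ when two further blocks are free, and $\binom{k-1}{3}=\tfrac{(k-1)(k-2)(k-3)}{6}$ when all three subordinate sizes are free — and these are precisely the polynomial prefactors multiplying $x^{x_1}B_{k-x_1}(x)$, $B_k(x)$, and $x^k$ in each row (the prefactor on $x^k$ being the total number of admissible quadruples of that $k$). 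Carrying this out reproduces every entry, and the main obstacle throughout remains the continuation counts of the third step.
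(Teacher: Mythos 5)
Your proposal is correct and takes essentially the same route as the paper — indeed, the paper supplies no separate proof of this lemma at all beyond the remark, embedded in the statement itself, that the result is brute-forceable because the structure of $E_n$ off the main branch is fixed; your four-step plan (completeness of the eleven gluing data via the corollary to Theorem \ref{height} and Corollary \ref{l:cor}, parametrizations via Lemma \ref{tri}, well-definedness and exhaustive counting of $Cont(t)$ via Lemma \ref{sight-set fix}, and generating functions obtained by summing Lemma \ref{SD} over each parametrization as organized in Lemma \ref{BT}) is exactly that computation made explicit. One caveat: your rule that the $x^k$ term should carry the number of admissible quadruples as its prefactor agrees with rows 4 and 6--11 of the paper's table but not with rows 1, 2, 3 and 5, which display a bare $x^k$; the table is internally inconsistent on this point, and your (self-consistent) procedure reproduces it only up to what appear to be typos in those rows.
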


\begin{theorem}\label{main}
The generating function for $$E(x) = \sum_{i}e_n x^n = x^3\sum_{t \in \mathcal{T}} Cont(t) \sum_{k}\sum_{(V_1, V_2, V_3, V_4) \in BT_{t,k}}x^{x_1}B_{k-x_1}(x) + x^{x_2}B_{k-x_2}$$ is
$$\frac{P(x)+Q(x)\sqrt{1-4x}}{R(x)}.$$
Where \tiny
$$P(x) =  -3224 x^{16}+6436 x^{15}+2494 x^{14}-12226 x^{13}+7795 x^{12}-6091 x^{11}+14594 x^{10}-18859 x^9+$$
$$13325 x^8-4899 x^7+569 x^6+350 x^5-363 x^4+217 x^3-70 x^2+8 x$$
$$\hspace{-19mm}Q(x) = -2034 x^{15}+7488 x^{14}-9888 x^{13}+5071 x^{12}+815 x^{11}-5070 x^{10}+8318 x^9-$$
$$7544 x^8+3537 x^7-681 x^6-79 x^5+146 x^4-125 x^3+54 x^2-8 x$$
\normalsize
and
$$R(x) = (x-1)^4 \left(x^2+4 x-1\right).$$
\end{theorem}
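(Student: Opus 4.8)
The statement is the capstone that assembles the combinatorial reductions of the previous sections into a closed form, so the plan is frankly computational: take the master formula $E(x)=x^{3}\sum_{t\in\mcT}Cont(t)\sum_{BT_t}SD_{BT_t}(x)$, insert the explicit data tabulated in Lemma \ref{cases}, and simplify. By Lemmas \ref{SD} and \ref{BT} every contribution is a sum of the three elementary pieces $x^{x_1}B_{k-x_1}(x)$, $B_k(x)$ and $x^k$, each carrying an integer or polynomial coefficient (in $k$ and $x_1$) recorded in the third table of Lemma \ref{cases}; the continuation counts $Cont(t)$ and the symmetry multiplicities in the last column of that table are just the rational constants weighting each of the eleven rows. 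After multiplying the eleven per-row generating functions by their weights, summing, and prefixing $x^3$, the whole problem reduces to evaluating sums of the form $\sum_k\sum_{x_1}\binom{x_1-1}{j}x^{x_1}B_{k-x_1}(x)$, $\sum_k\binom{k-1}{j}B_k(x)$ and $\sum_k\binom{k-1}{j}x^k$ for $0\le j\le 3$, the cubic bound coming from $|\mcD_{v_b}|\le 4$.

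The enabling step is to make the $k$-dependence of $B_k(x)$ explicit. The ballot numbers of Lemma \ref{IK} satisfy the classical identity
\[
I_k(x)=\sum_n\frac{\binom{2n-k}{n-k}k}{2n-k}x^n=u^k,\qquad u:=\tfrac12\bigl(1-\sqrt{1-4x}\bigr),
\]
that is $I_k(x)=(xC(x))^k$ for the Catalan series $C$; this is checked directly from the formula (for instance $I_1=xC$ and $I_2=x^2C^2$) and makes the generating function in $k$ geometric. Since $u/(1-u)=I(x)$, substituting the closed forms $IB_k(x)=\tfrac{(1-x)u^k}{x}-x^{k-1}+x^k$ and $B_k(x)=\kappa\,IB_k(x)$ with $\kappa:=I(x)/(1-Br(x)^2)$ collapses the weighted $k$-sums neatly:
\[
\sum_{m\ge 1}\binom{m-1}{j}B_m(x)=\kappa\!\left(\frac{(1-x)\,I(x)^{j+1}}{x}-\frac{x^{j}}{(1-x)^{j}}\right),\qquad \sum_{x_1\ge 1}\binom{x_1-1}{j}x^{x_1}=\frac{x^{j+1}}{(1-x)^{j+1}}.
\]
The doubly indexed sums factor through $m=k-x_1$ into products of these two families; the rows whose weight couples $k$ and $x_1$, such as $(k-1)(x_1-1)=(x_1-1)(m+x_1-1)$, split into finitely many such products after writing $k-1=m+x_1-1$, so every row evaluates to an element of $\Q(x)[\sqrt{1-4x}]$.

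It then remains to add the eleven weighted rows, multiply by $x^3$, and reduce the result, which is linear in $\sqrt{1-4x}$ over $\Q(x)$, to the claimed form $\frac{P(x)+Q(x)\sqrt{1-4x}}{R(x)}$. The denominator is predicted by the two sources of poles: the most singular geometric sum $\sum_k\binom{k-1}{3}x^k=x^4/(1-x)^4$ forces the factor $(x-1)^4$, while rationalizing $\kappa$ produces $x^2+4x-1$, because writing $1-Br(x)^2=\tfrac{1-x}{2x^4}\bigl(R_0(x)-(3x-1)\sqrt{1-4x}\bigr)$ with $R_0=-2x^3-4x^2+5x-1$ one computes $R_0^2-(3x-1)^2(1-4x)=4x^4(x^2+4x-1)$; together these give $R(x)=(x-1)^4(x^2+4x-1)$, and clearing this denominator isolates $P$ and $Q$.

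The genuine obstacle is not conceptual but the sheer bulk of the bookkeeping: correctly transcribing the continuation counts and block-size parametrizations of Lemma \ref{cases}, carrying roughly a dozen Catalan-weighted double sums, and collapsing the accumulated $\sqrt{1-4x}$ terms without sign or off-by-one errors. I would organize this as a single symbolic computation and cross-check it against the series expansion of $E(x)$ computed directly for small $n$ up to the degrees of $P,Q,R$, since an identity in $\Q(x)[\sqrt{1-4x}]$ with a priori degree bounds is determined by finitely many coefficients.
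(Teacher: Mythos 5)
Your proposal is correct and takes essentially the same route as the paper: both plug the tabulated data of Lemma \ref{cases} into the master formula of Lemma \ref{BT} and reduce the resulting sums of $x^k$, $B_k(x)$, and $x^{x_1}B_{k-x_1}(x)$ terms to a closed form in $\Q(x)[\sqrt{1-4x}]$. In fact your writeup supplies the summation machinery (the identity $I_k(x)=u^k$, the binomial/geometric collapse of the weighted sums, and the rationalization of $1-Br(x)^2$ producing the factor $x^2+4x-1$) that the paper's one-paragraph proof leaves implicit.
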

\begin{proof}
This is obtained from the expression in \ref{BT} with the data obtained from \ref{cases} in $3$ parts. We sum those parts containing pure $x^k$ terms, those containing terms with $x_1 = 0$ ($B_k(x)$ terms) and those containing $B_{k-x_1}(x)$ terms. For each broken tower this gives a distinct generating function, adding each case together gives the theorem.
\end{proof}

\section{$E$-type Diagrams without full support}
\begin{theorem}
	The generating function $E_{Disc.}(x)$, enumerating the total number of staircase diagrams over the graph $E_n$ (including disconnected) is the rational $E_{Disc.}$, given below.
\end{theorem}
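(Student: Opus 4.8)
The plan is to reduce the enumeration of all (possibly disconnected) staircase diagrams over $E_n$ to the already-completed connected, fully supported case encapsulated in Theorem \ref{main}. The key structural observation is that any staircase diagram $\mcD$ over $E_n$ decomposes according to its support $\supp(\mcD)$, which is a union of connected subgraphs of the Dynkin diagram. First I would set up the combinatorial decomposition: a general diagram over $E_n$ is determined by a choice of a subgraph of $E_n$ (the support) together with a fully supported staircase diagram on each connected component of that subgraph. Since connected subgraphs of $E_n$ are themselves either paths (type $A$) or smaller star graphs of type $D$ or $E$, I can express the total count in terms of generating functions we either already have or can assemble: the type $A$ count from \cite{RS15}, the type $D$ count (which by the same techniques as Sections 3--4 reduces to the star-graph decomposition, or is available from \cite{RS15}), and the fully supported type $E$ count $E(x)$ from Theorem \ref{main}.

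The central device is to translate ``summing over all ways to place disjoint connected pieces along the $E_n$ graph'' into a generating-function identity. Concretely, I would let $F_A(x)$, $F_D(x)$, and $F_E(x)$ denote the generating functions counting fully supported connected diagrams over the maximal $A$-, $D$-, and $E$-type subgraphs, indexing $F_E$ by $E(x)$ from Theorem \ref{main}. Because $E_n$ has a fixed branch structure (one trivalent vertex, three arms of lengths $1$, $2$, and $n-3$), the connected subgraphs containing the branch vertex are of bounded ``shape'' in the two short arms but unbounded along the main arm; subgraphs avoiding the branch vertex are just paths. I would partition the sum over supports into (i) supports omitting the branch vertex, which produces products of path-type generating functions and is straightforward, and (ii) supports containing the branch vertex, where the short arms contribute only finitely many local configurations that can be enumerated by hand, each multiplied by a main-arm factor built from $E(x)$ and path contributions. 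Assembling these as a sum of products of known rational-and-$\sqrt{1-4x}$ functions yields $E_{Disc.}(x)$.

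The main obstacle I expect is bookkeeping the boundary interactions correctly: when the support is a proper subgraph, the diagram on each component must be fully supported on that component, and I must avoid double-counting by fixing precisely which vertices lie in the support versus which are genuinely absent. In particular, the interface between a supported component and an unsupported vertex imposes no staircase axiom across the gap, so the components truly factor --- but verifying that factorization (that there is no hidden constraint linking two components of the support that happen to be adjacent-but-separated in $E_n$) is the delicate point. Once factorization is established, the computation is the combination of finitely many known series. Since each building block has the form $(P + Q\sqrt{1-4x})/R$ with the same radical, closure under the finite sums and products of such expressions guarantees $E_{Disc.}(x)$ has the claimed shape, and clearing denominators and collecting the $\sqrt{1-4x}$ terms produces the explicit $P_{Disc.}, Q_{Disc.}, R_{Disc.}$ stated in the introduction.
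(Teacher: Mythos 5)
Your proposal matches the paper's proof in both strategy and substance: the paper likewise sums over supports via a case analysis on which of the four vertices near the branching point (the branch vertex plus the three short-arm vertices) are covered, factoring each case into products of the known series $A_{Disc.}(x)$, $A(x)$, $D(x)$, and $E(x)$, and then closing the computation inside the field of expressions of the form $(P+Q\sqrt{1-4x})/R$. The ``delicate point'' you flag --- that components of the support separated by an uncovered vertex impose no constraints on one another --- is exactly what the paper's $X$/$O$ vertex bookkeeping and its factor $\left(1+A_{Disc.}(x)\right)$ take care of.
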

\begin{proof}
From \cite{RS15} we are given the generating function for diagrams and fully supported diagrams of type $A_n$ and $\mcD_n$. We denote these by $A_{Disc.}(x)$, $A(x)$, $\mcD_{Disc.}(x)$, and $D(x)$, respectively.To enumerate the total number of staircase diagrams over the graph $E_n$ we consider $4$ key vertices and apply an inclusion-exclusion argument. The vertices in question are vertices in the $2$ minor branches and the branch vertex. Any vertex can either have blocks lying over it or not. In the following diagrams an $X$ denotes no blocks lying over this vertex while an $O$ denotes blocks do lie over this vertex. The generating function for each possibility is also given. Note that in the case where he branch vertex is non-empty ($|D_{v_b} \ne \emptyset|$) we must multiply by a factor of $1 + A_{Disc.}(x)$ because the diagram is either fully supported on the main branch or is not and has another diagram (of type $A_{Disc.}(x)$ within it).

\tikzset{every picture/.style={line width=0.75pt}}
\begin{tikzpicture}[x=0.75pt,y=0.75pt,yscale=-1,xscale=1]

\draw    (53.83,82.11) -- (23,111) ;

\draw    (106.83,31.11) -- (76,60) ;

\draw    (54.83,161.11) -- (25,133) ;

\draw    (458.83,80.11) -- (428,109) ;

\draw    (511.83,29.11) -- (481,58) ;

\draw    (459.83,159.11) -- (430,131) ;

\draw    (312.83,82.11) -- (282,111) ;

\draw    (365.83,31.11) -- (335,60) ;

\draw    (313.83,161.11) -- (284,133) ;

\draw    (182.83,82.11) -- (152,111) ;

\draw    (235.83,31.11) -- (205,60) ;

\draw    (183.83,161.11) -- (154,133) ;

\draw (66,71) node   {$X$};
\draw (15,120) node   {$X$};
\draw (64,172) node   {$X$};
\draw (116,20) node   {$X$};
\draw (471,69) node   {$O$};
\draw (420,118) node   {$X$};
\draw (469,170) node   {$X$};
\draw (521,18) node   {$O$};
\draw (325,71) node   {$O$};
\draw (274,120) node   {$X$};
\draw (323,172) node   {$X$};
\draw (375,20) node   {$X$};
\draw (195,71) node   {$X$};
\draw (144,120) node   {$X$};
\draw (193,172) node   {$X$};
\draw (245,20) node   {$O$};
\draw (53,201) node   {$xA_{Disc.}( x)$};
\draw (178,199) node   {$xA_{Disc.}( x)$};
\draw (316,196) node   {$xA_{Disc.}( x)$};
\draw (469,195) node   {$3xA_{Disc.}( x)$};

\end{tikzpicture}

\begin{tikzpicture}[x=0.75pt,y=0.75pt,yscale=-1,xscale=1]

\draw    (53.83,82.11) -- (23,111) ;

\draw    (106.83,31.11) -- (76,60) ;

\draw    (54.83,161.11) -- (25,133) ;

\draw    (458.83,80.11) -- (428,109) ;

\draw    (511.83,29.11) -- (481,58) ;

\draw    (459.83,159.11) -- (430,131) ;

\draw    (312.83,82.11) -- (282,111) ;

\draw    (365.83,31.11) -- (335,60) ;

\draw    (313.83,161.11) -- (284,133) ;

\draw    (182.83,82.11) -- (152,111) ;

\draw    (235.83,31.11) -- (205,60) ;

\draw    (183.83,161.11) -- (154,133) ;

\draw (66,71) node   {$X$};
\draw (15,120) node   {$X$};
\draw (64,172) node   {$O$};
\draw (116,20) node   {$X$};
\draw (471,69) node   {$O$};
\draw (420,118) node   {$X$};
\draw (469,170) node   {$O$};
\draw (521,18) node   {$O$};
\draw (325,71) node   {$O$};
\draw (274,120) node   {$X$};
\draw (323,172) node   {$O$};
\draw (375,20) node   {$X$};
\draw (195,71) node   {$X$};
\draw (144,120) node   {$X$};
\draw (193,172) node   {$O$};
\draw (245,20) node   {$O$};
\draw (53,201) node   {$xA_{Disc.}( x)$};
\draw (178,199) node   {$xA_{Disc.}( x)$};
\draw (316,196) node   {$xA_{Disc.}( x)$};
\draw (469,195) node   {$3xA_{Disc.}( x)$};

\end{tikzpicture}

\begin{tikzpicture}[x=0.75pt,y=0.75pt,yscale=-1,xscale=1]

\draw    (51.83,82.11) -- (21,111) ;

\draw    (104.83,31.11) -- (74,60) ;

\draw    (52.83,161.11) -- (23,133) ;

\draw    (456.83,80.11) -- (426,109) ;

\draw    (509.83,29.11) -- (479,58) ;

\draw    (457.83,159.11) -- (428,131) ;

\draw    (310.83,82.11) -- (280,111) ;

\draw    (363.83,31.11) -- (333,60) ;

\draw    (311.83,161.11) -- (282,133) ;

\draw    (180.83,82.11) -- (150,111) ;

\draw    (233.83,31.11) -- (203,60) ;

\draw    (181.83,161.11) -- (152,133) ;

\draw (64,71) node   {$X$};
\draw (13,120) node   {$O$};
\draw (62,172) node   {$X$};
\draw (114,20) node   {$X$};
\draw (469,69) node   {$O$};
\draw (418,118) node   {$O$};
\draw (467,170) node   {$X$};
\draw (519,18) node   {$O$};
\draw (323,71) node   {$O$};
\draw (272,120) node   {$O$};
\draw (321,172) node   {$X$};
\draw (373,20) node   {$X$};
\draw (193,71) node   {$X$};
\draw (142,120) node   {$O$};
\draw (191,172) node   {$X$};
\draw (243,20) node   {$O$};
\draw (51,201) node   {$A( x)$};
\draw (176,199) node   {$A( x)$};
\draw (314,196) node   {$\frac{A( x) -x}{x}$};
\draw (467,195) node   {$\frac{A( x) -x-3x^{2}}{x^2}$};

\end{tikzpicture}

\begin{tikzpicture}[x=0.75pt,y=0.75pt,yscale=-1,xscale=1]

\draw    (53.83,82.11) -- (23,111) ;

\draw    (106.83,31.11) -- (76,60) ;

\draw    (54.83,161.11) -- (25,133) ;

\draw    (458.83,80.11) -- (428,109) ;

\draw    (511.83,29.11) -- (481,58) ;

\draw    (459.83,159.11) -- (430,131) ;

\draw    (312.83,82.11) -- (282,111) ;

\draw    (365.83,31.11) -- (335,60) ;

\draw    (313.83,161.11) -- (284,133) ;

\draw    (182.83,82.11) -- (152,111) ;

\draw    (235.83,31.11) -- (205,60) ;

\draw    (183.83,161.11) -- (154,133) ;

\draw (66,71) node   {$X$};
\draw (15,120) node   {$O$};
\draw (64,172) node   {$O$};
\draw (116,20) node   {$X$};
\draw (471,69) node   {$O$};
\draw (420,118) node   {$O$};
\draw (469,170) node   {$O$};
\draw (521,18) node   {$O$};
\draw (325,71) node   {$O$};
\draw (274,120) node   {$O$};
\draw (323,172) node   {$O$};
\draw (375,20) node   {$X$};
\draw (195,71) node   {$X$};
\draw (144,120) node   {$O$};
\draw (193,172) node   {$O$};
\draw (245,20) node   {$O$};
\draw (192,196) node   {$\frac{A( x) -x}{x}$};
\draw (63,196) node     {$\frac{A( x) -x}{x}$};
\draw (323,202) node   {$\frac{D( x)}{x^2}$};
\draw (471,202) node   {$\frac{E( x)}{x^3}$};

\end{tikzpicture}

Summing yields:
$$E_{Disc.}(x) = 12x^4A_{Disc.}(x) + x^3\left(1+A_{Disc.}(x)\right)\left(2A(x) + 3\frac{A(x)-x}{x} + \frac{A(x) - x- 3x^2}{x^2} + \frac{D(x)}{x^2} + \frac{E(x)}{x^3}\right)$$
$$= \frac{P_{Disc.}(x) + Q_{Disc.}(x)\sqrt{1-4x}}{R_{Disc.}(x)}.$$
Where \tiny
$$P_{Disc.}(x) = 1536 x^{16}-9586 x^{15}+20762 x^{14}-19750 x^{13}+10942 x^{12}-15139 x^{11}+27760 x^{10}-28954 x^9$$
$$+16898 x^8-4690 x^7-173 x^6+689 x^5-454 x^4+238 x^3-73 x^2+8 x$$
$$Q_{Disc.}(x) = 3224 x^{16}-13230 x^{15}+21016 x^{14}-15930 x^{13}+4800 x^{12}+3759 x^{11}-10616 x^{10}+13958 x^9$$
$$-10482 x^8+4200 x^7-695 x^6-95 x^5+168 x^4-140 x^3+57 x^2-8 x$$
\normalsize
and
$$R_{Disc.}(x) = (-1 + x)^4 (-1 + 6 x - 8 x^2 + 4 x^3).$$
\end{proof}

\section{Rationally Smooth Schubert Varieties}
In this section we enumerate the number of rationally smooth Schubert varieties in the classical finite type $E$. For any simple Lie group $G$ and any Borel subgroup $B$ take the Weyl group of $G$, $W$.
We can index the Schubert varieties $X(w)$ in the flag variety $G / B$ by elements $w \in W$. When $W = E_n$ for some $n$ we can answer how many rationally smooth Schubert varieties there are.

First we introduce the notion of a Billey-Postnikov (BP) decomposition. Let $S$ be the set of vertices in the Dynkin diagram of $W$, that is, the generators $W$ as a Coxeter group. Let $J \subset S$ and $W_J$ be the subgroup generated by $J$.
\begin{definition}\label{d:sphere}
We say a staircase $\mcD$ is spherical if $W_B$ is finite for all $B \in D$.
\end{definition}
We let $W^{J}$ denote the set of minimal length left cosets representatives of the quotient $W/W_{J}$.
Every element $w \in W$ can be uniquely decomposed as $w = vu$ with $v \in W^{J}$ and $u \in W_{J}$. This is known as the parabolic decomposition of $w$ with respect to $J$.
\begin{definition}
We say a parabolic decomposition $w = vu$ is a Billey-Postnikov decomposition if $S(v) \cap J \subseteq \mcD_{L}(u)$ where $\mcD_{L}(u)$ is the left descent set of $u$, that is $\mcD_{L}(u) = \{s \in S | l(su) = l(u) - 1\}$.
\end{definition}
Furthermore we define a complete Billey-Postnikov decomposition as follows:
\begin{definition}
$w$ has a complete BP decomposition, $w = v_{n}\cdots v_{1}$, if there is a increasing sequence of sets $\emptyset \subsetneq J_1 \subsetneq J_2 \cdots J_k = S(w)$ with $k = |S(w)|$ with $S(v_j) \subset J_{j}$, $v_j \in W^{J_{j}}$ and $v_{j}(v_{j-1} \cdots v_{1})$ is a BP decomposition.
\end{definition}
We now use a theorem from \cite{RS15a}
showing a bijection between rationally smooth Schubert varieties and complete BP decompositions.
\begin{theorem}
A Schubert variety $X(w)$ is smooth if and only if $w$ has a complete BP decomposition $w = v_k\cdots v_1$ where $X^{J_{j -1}}(v_j)$ is smooth.
\end{theorem}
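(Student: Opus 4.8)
The plan is to deduce both implications from a single structural input, the fibre-bundle characterisation of BP decompositions established in \cite{RS15a}, combined with an induction on the size of the support $S(w)$. The precise form of the input I would invoke is: a parabolic decomposition $w = vu$ with respect to $J$ is a BP decomposition if and only if the natural projection $\pi \colon X(w) \to X^J(v)$ is a Zariski-locally trivial fibre bundle whose fibre is the Schubert variety $X_J(u) \subseteq P_J/B$. To this I would add two standing facts: (i) a Zariski-locally trivial fibre bundle has (rationally) smooth total space exactly when both base and fibre are (rationally) smooth, since local triviality makes the Poincar\'e polynomial of the total space factor as the product of those of base and fibre; and (ii) the existence theorem that every rationally smooth $w$ with $S(w)\neq\emptyset$ admits a BP decomposition $w = vu$ with respect to a maximal parabolic $J = S(w)\setminus\{s\}$, where $s$ may be taken to be a leaf of the Dynkin diagram on $S(w)$ so that $S(u)$ fills out $J$.

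For the direction asserting that a complete BP decomposition with smooth factors forces smoothness, I would induct on $j$ along the chain $\emptyset \subsetneq J_1 \subsetneq \cdots \subsetneq J_k = S(w)$. Writing $w_j = v_j \cdots v_1 \in W_{J_j}$, the defining condition makes $w_j = v_j \cdot w_{j-1}$ a BP decomposition with respect to $J_{j-1}$, so by the structure theorem $X(w_j) \to X^{J_{j-1}}(v_j)$ is a fibre bundle with fibre $X(w_{j-1})$. The base $X^{J_{j-1}}(v_j)$ is smooth by hypothesis, and $X(w_{j-1})$ is smooth by induction, so fact (i) gives that $X(w_j)$ is smooth; at $j=k$ this yields smoothness of $X(w)$. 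The base case $w_1 = v_1$ lives over a single vertex, hence $X(w_1)$ is a point or $\mbP^1$ and is smooth.

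For the converse I would induct on $|S(w)|$. Given $X(w)$ smooth, fact (ii) produces a BP decomposition $w = vu$ with respect to a maximal parabolic $J = S(w)\setminus\{s\}$, and the structure theorem realises $X(w)\to X^J(v)$ as a fibre bundle with fibre $X_J(u)$. Smoothness of the total space forces, via fact (i), both $X^J(v)$ and $X_J(u)$ smooth; in particular $u \in W_J$ indexes a smooth Schubert variety with $S(u) = J \subsetneq S(w)$, so the inductive hypothesis endows $u$ with a complete BP decomposition $u = v_{k-1}\cdots v_1$ along a chain $J_1 \subsetneq \cdots \subsetneq J_{k-1} = J$ whose factors $X^{J_{j-1}}(v_j)$ are smooth. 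Setting $v_k = v$, $J_k = S(w)$, one checks that $w = v_k u$ extends the chain by the single step $J_{k-1} \subsetneq J_k$ with $X^{J_{k-1}}(v_k) = X^J(v)$ smooth, producing the desired complete BP decomposition of $w$.

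The main obstacle is fact (ii): guaranteeing a BP decomposition with respect to a \emph{maximal} parabolic (and one that removes a leaf, so that supports propagate cleanly through the induction), rather than merely some parabolic. This is the substantive content imported from \cite{RS15a} and ultimately from \cite{BP05}; its verification rests on analysing the factorisation of the Poincar\'e polynomial of $X(w)$ and showing that rational smoothness forces the required splitting at a leaf $s$. A secondary point needing care is keeping the geometric statement (fibre bundle with smooth base and fibre) in step with its combinatorial shadow on Poincar\'e polynomials, which is exactly what lets the same induction serve both the \emph{smooth} and the \emph{rationally smooth} versions of the statement.
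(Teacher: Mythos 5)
The paper does not actually prove this theorem---it is imported wholesale from \cite{RS15a}, with the induced fibre-bundle tower displayed immediately afterwards as illustration---and your proposal reconstructs precisely the argument of that cited reference: the fibre-bundle characterisation of BP decompositions, the equivalence of smoothness of a Zariski-locally trivial bundle's total space with smoothness of base and fibre, the existence of Grassmannian (maximal-parabolic) BP decompositions for rationally smooth elements, and induction on $|S(w)|$. Your argument is sound as a reconstruction of that proof, and in fact the one point you hedge on is automatic: the condition $S(u)=J$ in your converse step needs no appeal to the leaf choice, since any $t\in J\setminus S(u)$ would lie in $S(v)\cup S(u)=S(w)$ and hence in $S(v)\cap J\subseteq \mcD_{L}(u)\subseteq S(u)$, a contradiction.
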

Inducing the following fiber bundle structure on $X(w)$ given the complete BP decomposition $w = v_k\cdots v_1$:

\begin{center}

\tikzset{every picture/.style={line width=0.75pt}} %

\begin{tikzpicture}[x=0.75pt,y=0.75pt,yscale=-1,xscale=1]
\draw    (166.5,194) -- (166.99,266) ;
\draw [shift={(167,268)}, rotate = 269.61] [color={rgb, 255:red, 0; green, 0; blue, 0 }  ][line width=0.75]    (10.93,-3.29) .. controls (6.95,-1.4) and (3.31,-0.3) .. (0,0) .. controls (3.31,0.3) and (6.95,1.4) .. (10.93,3.29)   ;
\draw    (251.5,195) -- (251.99,267) ;
\draw [shift={(252,269)}, rotate = 269.61] [color={rgb, 255:red, 0; green, 0; blue, 0 }  ][line width=0.75]    (10.93,-3.29) .. controls (6.95,-1.4) and (3.31,-0.3) .. (0,0) .. controls (3.31,0.3) and (6.95,1.4) .. (10.93,3.29)   ;
\draw    (556.5,194) -- (556.99,266) ;
\draw [shift={(557,268)}, rotate = 269.61] [color={rgb, 255:red, 0; green, 0; blue, 0 }  ][line width=0.75]    (10.93,-3.29) .. controls (6.95,-1.4) and (3.31,-0.3) .. (0,0) .. controls (3.31,0.3) and (6.95,1.4) .. (10.93,3.29)   ;
\draw    (415.5,194) -- (415.99,266) ;
\draw [shift={(416,268)}, rotate = 269.61] [color={rgb, 255:red, 0; green, 0; blue, 0 }  ][line width=0.75]    (10.93,-3.29) .. controls (6.95,-1.4) and (3.31,-0.3) .. (0,0) .. controls (3.31,0.3) and (6.95,1.4) .. (10.93,3.29)   ;
\draw    (195.5,184) -- (216.5,184) ;
\draw [shift={(218.5,184)}, rotate = 180] [color={rgb, 255:red, 0; green, 0; blue, 0 }  ][line width=0.75]    (10.93,-3.29) .. controls (6.95,-1.4) and (3.31,-0.3) .. (0,0) .. controls (3.31,0.3) and (6.95,1.4) .. (10.93,3.29)   ;
\draw    (284.5,185) -- (306.5,185) ;
\draw [shift={(308.5,185)}, rotate = 180] [color={rgb, 255:red, 0; green, 0; blue, 0 }  ][line width=0.75]    (10.93,-3.29) .. controls (6.95,-1.4) and (3.31,-0.3) .. (0,0) .. controls (3.31,0.3) and (6.95,1.4) .. (10.93,3.29)   ;
\draw    (453.5,186) -- (512.5,185.03) ;
\draw [shift={(514.5,185)}, rotate = 539.06] [color={rgb, 255:red, 0; green, 0; blue, 0 }  ][line width=0.75]    (10.93,-3.29) .. controls (6.95,-1.4) and (3.31,-0.3) .. (0,0) .. controls (3.31,0.3) and (6.95,1.4) .. (10.93,3.29)   ;
\draw    (332.5,185) -- (359.5,185) ;
\draw [shift={(361.5,185)}, rotate = 180] [color={rgb, 255:red, 0; green, 0; blue, 0 }  ][line width=0.75]    (10.93,-3.29) .. controls (6.95,-1.4) and (3.31,-0.3) .. (0,0) .. controls (3.31,0.3) and (6.95,1.4) .. (10.93,3.29)   ;

\draw (148,173.4) node [anchor=north west][inner sep=0.75pt]    {$X( v_{1})$};
\draw (221,174.4) node [anchor=north west][inner sep=0.75pt]    {$X( v_{2} v_{1})$};
\draw (364,174.4) node [anchor=north west][inner sep=0.75pt]    {$X( v_{k-1} ...v_{1})$};
\draw (519,175.4) node [anchor=north west][inner sep=0.75pt]    {$X( v_{k} \ ...v_{1})$};
\draw (147,273.4) node [anchor=north west][inner sep=0.75pt]    {$X( v_{1})$};
\draw (227,272.4) node [anchor=north west][inner sep=0.75pt]    {$X^{J_{1}}( v_{2})$};
\draw (391,275.4) node [anchor=north west][inner sep=0.75pt]    {$X^{J_{k-2}}( v_{k-1})$};
\draw (521,274.4) node [anchor=north west][inner sep=0.75pt]    {$X^{J_{k-1}}( v_{k})$};
\draw (314,176.4) node [anchor=north west][inner sep=0.75pt]    {$...$};

\end{tikzpicture}
\end{center}
\newpage
We also have the following theorem.
\begin{theorem}
The subgroup $W_J$ is finite if $J$ is a finite type Dynkin diagram.
\end{theorem}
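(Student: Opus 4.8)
The plan is to reduce the statement to two classical facts about Coxeter groups: the structure of standard parabolic subgroups, and the positive-definiteness criterion for finiteness. Since $W$ is a Coxeter group with generating set $S$ and $W_J = \langle J \rangle$, the whole statement becomes a statement internal to Coxeter theory, independent of the Lie-theoretic packaging.

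First I would recall that for the Coxeter system $(W,S)$ and any subset $J \subseteq S$, the subgroup $W_J$ is itself a Coxeter group, and in fact $(W_J, J)$ is a Coxeter system whose Coxeter matrix is the restriction $(m_{st})_{s,t \in J}$ of that of $(W,S)$. In particular the Coxeter (equivalently Dynkin) diagram of $(W_J, J)$ is exactly the subdiagram of the diagram of $W$ induced on the vertex set $J$. This is standard in the theory of Coxeter groups. Thus the hypothesis that $J$ is a finite type Dynkin diagram is precisely the statement that the Coxeter system $(W_J, J)$ has a finite type diagram, and the correct thing to verify here is only that we have identified the induced subdiagram correctly.

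Next I would invoke the finiteness criterion. Let $B$ be the symmetric bilinear form of $(W_J, J)$ on $\R^{J}$, determined by $B(\alpha_s, \alpha_t) = -\cos(\pi/m_{st})$ for $s,t \in J$. A Coxeter group is finite if and only if this form is positive definite, which holds exactly when its diagram is a finite disjoint union of finite type diagrams. Since by the first step the diagram of $(W_J, J)$ is of finite type, the form $B$ is positive definite, and hence $W_J$ is finite, as claimed.

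The point worth emphasizing is that this conclusion is independent of whether the ambient group $W$ is finite: even when $W = E_n$ is infinite (affine or indefinite type for $n \geq 9$), the restricted form on $\R^{J}$ depends only on the induced subdiagram, so a block whose diagram is of finite type always yields a finite parabolic subgroup. Consequently there is no genuine obstacle beyond correctly reading off the induced subdiagram; the real content, namely the classification of finite Coxeter groups via positive-definiteness, is classical and is simply cited. This is exactly what makes the spherical condition of Definition \ref{d:sphere} a local, checkable property of the individual blocks of $\mcD$.
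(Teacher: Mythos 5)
Your proposal is correct. The paper states this theorem without any proof, treating it as a classical fact; your argument --- that $(W_J, J)$ is itself a Coxeter system whose diagram is the induced subdiagram of $J$, combined with the positive-definiteness classification of finite Coxeter groups --- is precisely the standard justification the paper implicitly invokes, so you have simply supplied the details the paper omits (including the relevant observation that finiteness of $W_J$ does not depend on finiteness of the ambient $W$).
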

Which give the following corollary due to \ref{d:sphere}.
\begin{corollary}
A staircase is spherical if each block is a finite type Dynkin diagram.
\end{corollary}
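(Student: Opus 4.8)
The plan is to observe that this corollary is an immediate consequence of combining Definition~\ref{d:sphere} with the theorem that directly precedes it. By Definition~\ref{d:sphere}, the staircase diagram $\mcD$ is spherical precisely when the parabolic subgroup $W_{\mcB}$ is finite for every block $\mcB \in \mcD$. The preceding theorem asserts that $W_J$ is finite whenever $J \subseteq S$ has induced subdiagram of finite type. Hence the whole argument reduces to checking that the hypothesis ``each block is a finite type Dynkin diagram'' supplies exactly the input needed to invoke that theorem block-by-block.

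First I would make explicit the convention that, for a block $\mcB \subseteq S$, the phrase ``$\mcB$ is a finite type Dynkin diagram'' refers to the subdiagram of the ambient Dynkin diagram induced on the vertices of $\mcB$. It is worth recording that axiom~(1) of Definition~\ref{D:staircase} guarantees every block is connected, so its induced subdiagram is in fact a connected Dynkin diagram; thus ``finite type'' unambiguously singles out one of the classified connected finite-type diagrams, and no subtlety about disconnected components can arise.

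Given this convention the proof is a one-line application of the two quoted results: fix an arbitrary block $\mcB \in \mcD$. By hypothesis its induced subdiagram is of finite type, so the preceding theorem yields that $W_{\mcB}$ is finite. Since $\mcB$ was arbitrary, $W_{\mcB}$ is finite for all $\mcB \in \mcD$, which is exactly the defining condition of Definition~\ref{d:sphere} for $\mcD$ to be spherical.

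I do not anticipate any genuine obstacle here. The only points requiring care are the interpretation of ``block is a finite type Dynkin diagram'' as a statement about the induced subdiagram, and the harmless use of axiom~(1) to exclude disconnected blocks; everything else is simply a matter of quoting the definition and the theorem in the correct order.
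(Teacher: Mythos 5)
Your proposal is correct and takes essentially the same route as the paper: the paper offers no separate argument, presenting the corollary as an immediate consequence of Definition~\ref{d:sphere} combined with the preceding theorem applied block-by-block, which is exactly your reasoning. Your additional remark on connectedness via axiom~(1) of Definition~\ref{D:staircase} is harmless but not needed for the deduction.
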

Now from \cite{RS15} we have the following bijection between spherical staircase diagrams and elements $w \in W$ with complete BP decompositions.
\begin{theorem}
There is a bijection between spherical staircase diagrams and elements $w \in W$ with complete BP decomposition $w = v_{k} \cdots v_{1}$ with $v_j$ the largest element in $W_{S(v_j)} \cap W^{J}$ for all $j \in \{1 , \cdots, n\}$.
\end{theorem}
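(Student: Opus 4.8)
The plan is to reconstruct the correspondence of \cite{RS15}, now restricted to the spherical setting. The forward map sends a spherical staircase diagram $\mcD$ to an element $w \in W$ as follows. Because $\mcD$ is spherical, each block $\mcB \in \mcD$ spans a finite-type subdiagram, so $W_{\mcB}$ is finite and possesses a unique longest element. I would fix any linear extension $\mcB_1 \prec \mcB_2 \prec \cdots \prec \mcB_n$ of the poset $(\mcD,\preceq)$, set $J_0 = \emptyset$ and $J_j = \bigcup_{i\le j}\mcB_i$, and let $v_j$ be the longest element of $W_{\mcB_j}\cap W^{J_{j-1}}$. I would then define $w := v_n\cdots v_1$ and argue that this is a complete BP decomposition of the required maximal form, with $S(v_j)=\mcB_j$.

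The central structural point I would establish first is that \emph{incomparable blocks have disjoint, pairwise non-adjacent supports, and hence generate commuting reflection subgroups.} This follows from axioms (2) and (3) of Definition \ref{D:staircase}: if $s\in\mcB\cap\mcB'$ then $\mcB,\mcB'\in\mcD_s$, which is a chain, so $\mcB,\mcB'$ are comparable; and if $s\in\mcB$, $t\in\mcB'$ with $s$ adjacent to $t$, then $\mcB,\mcB'\in\mcD_s\cup\mcD_t$, again a chain. Consequently, whenever $\mcB_i$ and $\mcB_j$ are incomparable the factors $v_i$ and $v_j$ commute, so the product $v_n\cdots v_1$ is independent of the chosen linear extension, which gives well-definedness of the forward map. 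Next I would verify the BP condition $S(v_j)\cap J_{j-1}\subseteq \mcD_L(v_{j-1}\cdots v_1)$ and the smoothness of each $X^{J_{j-1}}(v_j)$: the former reflects that $\mcB_j\cup\mcB_{j-1}$ is connected and overlapping (axiom (1) with the chain conditions), while the latter is automatic because $v_j$ is the \emph{longest} element of a single-coset intersection, so $X^{J_{j-1}}(v_j)$ is a rationally smooth Grassmannian-type Schubert variety.

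For the reverse map I would start from a complete BP decomposition $w=v_n\cdots v_1$ of the prescribed maximal form, declare $\mcB_j := S(v_j)$, and order the blocks by decreeing $\mcB_i\prec\mcB_j$ whenever $i<j$ and $S(v_i)\cap S(v_j)\ne\emptyset$ or their supports are adjacent, extending transitively. I would then check the four staircase axioms: connectivity of each $\mcB_j$ and of $\mcB_j\cup\mcB_{j-1}$ comes from the geometry of maximal coset representatives and the BP condition; that each $\mcD_s$ is a chain follows from the nesting $J_0\subsetneq J_1\subsetneq\cdots$; and axiom (4), that every block is both minimal over some vertex and maximal over another, is precisely the reflection of $v_j$ being the longest element of $W_{S(v_j)}\cap W^{J_{j-1}}$. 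Finally I would check that the two maps are mutually inverse, which is bookkeeping once $S(v_j)=\mcB_j$ and the poset relations are matched.

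The step I expect to be the main obstacle is the well-definedness together with the precise matching between the \emph{maximality} condition on the $v_j$ and staircase axiom (4). Proving independence of the linear extension requires the commutativity lemma above combined with the fact that reordering commuting Grassmannian factors preserves the BP property at every stage; and translating ``$v_j$ is the longest element of $W_{S(v_j)}\cap W^{J_{j-1}}$'' into ``$\mcB_j$ sees both up and down'' is where the combinatorics of descent sets and of the support of longest coset representatives must be handled carefully, so I would lean on the descent-set characterisations of \cite{RS15} and \cite{RS15a} rather than re-deriving them.
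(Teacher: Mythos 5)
A preliminary remark on the comparison itself: the paper contains no proof of this theorem. It is imported verbatim from \cite{RS15} (``Now from \cite{RS15} we have the following bijection\dots''), so there is no in-paper argument to measure your proposal against; what you have written is a reconstruction of the strategy of the cited source. Within that strategy, your key structural lemma is correct and correctly proved: axioms (2) and (3) of Definition \ref{D:staircase} do force incomparable blocks to have disjoint, non-adjacent supports, hence commuting factors, and this is exactly what makes the product $v_n\cdots v_1$ independent of the linear extension.

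There is, however, a genuine gap, and it sits on both sides of your map. The paper's definition of a complete BP decomposition demands $k=|S(w)|$ factors, i.e.\ each $J_j$ adds exactly one vertex. Your forward map produces one factor per block, which in general has far fewer than $|S(w)|$ factors, so its output is not a complete BP decomposition in the required sense; one must further refine each block factor (the longest element of $W_{\mcB}\cap W^{J_{\mcB}}$) into one-vertex-at-a-time maximal Grassmannian factors, a step present in \cite{RS15}, \cite{RS15a} and absent from your outline. More seriously, your inverse map ``declare $\mcB_j:=S(v_j)$'' is wrong. Take $W$ of type $A_2$ with $S=\{s,t\}$ and $w=sts$. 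Its complete maximal BP decomposition in the paper's sense is $w=(st)(s)$ with $J_1=\{s\}$, $J_2=\{s,t\}$: here $s$ is the longest element of $W_{\{s\}}$ and $st$ is the longest element of $W_{\{s,t\}}\cap W^{\{s\}}$, and the BP condition $S(st)\cap J_1=\{s\}\subseteq \mcD_L(s)$ holds. Your recipe yields the blocks $\{s\}\prec\{s,t\}$, which is not a staircase diagram: the block $\{s\}$ is the minimum but not the maximum of $\mcD_s$ and contains no other vertex, so axiom (4) fails. The actual bijection sends $sts$ to the one-block diagram $\{\{s,t\}\}$; to recover it one must merge factors with nested supports into a single block. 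Proving that this merging is forced, that it produces a valid diagram, and that the result depends only on $w$ and not on the chosen decomposition (the same $w$ admits several maximal factorizations at different granularities) is precisely the nontrivial content of injectivity and surjectivity in \cite{RS15}, and it is the step your proposal is missing: ``blocks are the supports of the factors'' is the natural guess, but it is the wrong one, and correspondingly the asserted equivalence between maximality of the $v_j$ and axiom (4) is subtler than your sketch suggests.
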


\newpage

\printbibliography
\end{document}